\date{}
\newtheorem{lemma}{Lemma}[section]
\newtheorem{theorem}[lemma]{Theorem}
\newtheorem{proposition}[lemma]{Proposition}
\newtheorem{notation}[lemma]{Notation}
\newtheorem{remark}[lemma]{Remark}
\begin{document}
	\title{
		The maximum number of cliques in graphs with given fractional matching number and minimum degree\footnote{E-mail addresses:
			{\tt lichengli0130@126.com}(C. Li),{\tt tyr2290@163.com}(Y. Tang).}}
	\author{\hskip -10mm Chengli Li and Yurui Tang\\
		{\hskip -10mm \small Department of Mathematics, East China Normal University, Shanghai 200241, China}}
	\maketitle
	\begin{abstract}
		Recently, Ma, Qian and Shi determined the maximum size of an $n$-vertex graph with given fractional matching number $s$ and maximum degree at most $d$. Motivated by this result, we determine the maximum number of $\ell$-cliques in a  graph with given fractional matching number and minimum degree, which generalizes Shi and Ma's result about the maximum size of a  graph with given fractional matching number and minimum degree at least one.
		We also determine the maximum number of complete bipartite graphs in a graph with prescribed fractional matching number and minimum degree.
	\end{abstract}
	
	{\bf Key words.} Fractional matching number; Minimum degree; Tur$\acute{\mathrm {a} }$n-type problem
	
	{\bf Mathematics Subject Classification.} 05C35, 05C70, 05C72
	\vskip 8mm
	\section{Introduction}
	We consider finite simple graphs and use standard terminology and notations \cite{bondy}. 
	Let $G$ be a graph with vertex set $V(G)$ and edge set
	$E(G)$. We denote the cardinality of the vertex set by $n(G)$ and the cardinality of the edge set by $e(G)$.
	For a vertex $v$ in a graph, we denote by $d(v)$ and $N(v)$ the degree of $v$ and the neighborhood of $v$ in $G$, respectively.
	For vertex disjoint graphs $H$ and $F$, $H+F$ denotes the disjoint union of graphs $H$ and $F$ and $G\vee H$ denotes the join of $G$ and $H$, which is obtained from the disjoint union $G+H$ by adding edges joining every vertex of $G$ to every vertex of $H$. Denote by $\overline{G}$ the complement of a graph $G$. Let $K_{\ell}$ denote the complete graph of order $\ell$ and let $K_{r_{1},r_{2}}$ denote the complete bipartite graph with class sizes $r_{1}$, $r_{2}$.  We denote by $\delta(G)$ the minimum degree and denote by $\Delta(G)$ the maximum degree of a graph $G$.
	Let $\Gamma(v)$ denote the set of edges incident with $v$ in $G$. Let $N(H, G)$ denote the number of copies of $H$ in $G$; e.g., $N(K_2,G)=e(G)$. 
	Given a family of graphs $\mathcal{F},$ let $NM(H, \mathcal{F})=\max\left\{N(H, F) \ |\  F\in \mathcal{F}\right\}$.
	
	A matching is a set of pairwise nonadjacent edges of $G$. A fractional matching of a graph $G$ is a function $f$ assigning each edge with a real
	number in $[0,1]$ so that $\sum_{e\in \Gamma(x)}f(e)\le 1$ for each $x\in V(G)$. The fractional matching number of $G$, denoted by $v^{*}(G)$, is the maximum value of $\sum_{e\in E(G)}f(e)$ over all fractional matchings $f$.
	A matching is a special case of a fractional matching. It is known \cite{Esch} that the fractional matching number is either an integer or a semi-integer, i.e., 2$v^{*}(G)$ is an integer.
	
	Since Tur$\acute{\mathrm {a} }$n proved his well-known theorem in 1941 \cite{Turan}, Tur$\acute{\mathrm {a} }$n-type problems have received a lot of attention\cite{alon,Erdos, Gerbner,Luo,duan,Ning}. In\cite{Erdos}, Erd\H{o}s and Gallai determined the maximum size of an $n$-vertex graph with matching number $k$.
	\begin{theorem}(Erd\H{o}s and Gallai\cite{Erdos})
		\label{eg}
		Let $n,k$ be two positive integers with $n\geq 2k+1$. Let $G$ be an $n$-vertex graph with matching number $k$. Then
		$$e(G)\leq \max\left\{\binom{2k+1}{2},\frac{k(2n-k-1)}{2}\right\}.$$
	\end{theorem}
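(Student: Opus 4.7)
I would prove the theorem by induction on $n$ with $k$ fixed (together with a parallel induction on $k$ when needed). The base case $n = 2k+1$ is immediate, since $e(G) \le \binom{n}{2} = \binom{2k+1}{2}$, which matches the first term of the maximum.

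For the inductive step, take $G$ extremal on $n \ge 2k+2$ vertices with matching number $k$, and split on the maximum degree. If $\Delta(G) \le k$, then $e(G) \le nk/2$, and using $n \ge k+1$ one checks $nk/2 \le \frac{k(2n-k-1)}{2}$, so this case is done. Otherwise pick $v$ with $d(v) \ge k+1$ and consider $G' = G - v$, whose matching number is either $k-1$ or $k$. If it drops to $k-1$, I would invoke a parallel induction on $k$ to bound $e(G')$, and then add back $d(v) \le n-1$. If it remains $k$, I would exploit the fact that every neighbour of $v$ must lie in the vertex set of every maximum matching of $G'$ (else augmenting produces a matching of size $k+1$ in $G$), giving $d(v) \le 2k$, and then apply the inductive hypothesis on $n$ to $G'$.

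The main obstacle is closing the gap between the naive bound $e(G') + d(v)$ and the claimed bound: even after the sharpening $d(v) \le 2k$, a direct comparison fails in the worst regime of $n$ versus $k$, so a finer structural analysis is required. The standard move is to argue that if $G$ does not embed as a subgraph of either extremal template $K_{2k+1} \cup \overline{K_{n-2k-1}}$ or $K_k \vee \overline{K_{n-k}}$, then some edge can be removed without reducing the matching number, contradicting extremality. The transition between the two templates occurs near $n = (5k+3)/2$, where the two terms of the maximum coincide, so the inductive step has to be carried out carefully to make sure the reduced graph still satisfies the dominant bound in the appropriate regime. This regime-tracking bookkeeping, together with the structural lemma ruling out any configuration that is not a subgraph of one of the two templates, constitutes the technical heart of the proof.
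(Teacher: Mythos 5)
This statement is the classical Erd\H{o}s--Gallai theorem, which the paper only quotes with a citation and does not prove, so there is no in-paper argument to compare against; your proposal therefore has to stand on its own, and as written it does not. The parts you do carry out are fine: the base case $n=2k+1$, the case $\Delta(G)\le k$, and (although you do not check it) the branch where $\nu(G-v)=k-1$ actually closes, since $\frac{(k-1)(2(n-1)-k)}{2}+(n-1)=\frac{k(2n-k-1)}{2}$ and $\binom{2k-1}{2}+n-1$ is dominated by the right-hand side in both regimes of $n$. The genuine gap is exactly the branch you flag and then leave open: when $\nu(G-v)=k$, the bound $d(v)\le 2k$ plus the inductive bound on $G-v$ gives at best $\frac{k(2n-k-3)}{2}+2k=\frac{k(2n-k+1)}{2}$, which overshoots the target by $k$, and you supply no mechanism to recover this loss. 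Saying that ``a finer structural analysis is required'' and that this ``constitutes the technical heart of the proof'' is an admission that the proof is missing precisely where it is hard; a plan that defers the decisive step is not a proof.

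Moreover, the structural fallback you sketch is mis-stated. You claim that if $G$ is not a subgraph of $K_{2k+1}\cup\overline{K_{n-2k-1}}$ or $K_k\vee\overline{K_{n-k}}$ then ``some edge can be removed without reducing the matching number, contradicting extremality.'' Extremality here means $G$ maximizes the number of edges among graphs with matching number $k$; removing an edge while preserving the matching number contradicts nothing, it merely produces a smaller graph with the same matching number. The usable notion is the opposite one: pass to a graph that is edge-\emph{maximal} subject to $\nu\le k$ (adding any edge creates a $(k+1)$-matching) and then prove a structure theorem for such saturated graphs, e.g.\ via the Gallai--Edmonds decomposition or the Berge--Tutte set, showing they are contained in one of the two templates; alternatively, run the classical argument that deletes the vertex set of a carefully chosen maximum matching or a Berge--Tutte set and counts edges against it. Until you either fix this direction of the extremality argument and prove the structure lemma, or find a different way to absorb the excess $k$ in the $\nu(G-v)=k$ branch (for instance by deleting both endpoints of an edge of a maximum matching rather than a single vertex), the proposal does not establish the theorem.
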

	
	It is natural to ask the same question by putting
	constraints on the graphs with given matching number. In \cite{chvatal}, Chv$\acute{\mathrm{a}}$tal and Hanson determined
	the maximum size of graphs with given matching number $k$ and maximum degree at most $d$.
	By using the shifting method, Wang \cite{wang} determined the
	maximum number of copies of $K_{\ell}$  in an $n$-vertex graph with given matching number. In \cite{duan}, Duan, Ning, Peng, Wang and Yang determined
	the maximum number of cliques in graphs  with given minimum degree and matching number at most $k$.
	Recently, Liu and Zhang \cite{zll} determined the maximum number of copies of $K_{r_1,\dots,r_s}$ in graphs with given matching number and minimum degree at least $k$.
	
	Along these results, Ma, Qian and Shi \cite{Ma} determined the maximum size of an $n$-vertex graph with fractional matching number $s$ and maximum degree at most $d$. As a corollary, they obtained the maximum size of graphs with a given fractional matching number.
	
	\begin{theorem}(Ma, Qian and Shi \cite{Ma})
		Let $n$, $2s$ and $d$ be positive integers with $n>2s$.
		Denote by $f(n,s,d)=\max \{e(G):n(G)=n,v^{*}(G)=s,\Delta(G)\leq d\}$.
		If $2s$ is even, then
		$$f(n,s,d)= \begin{cases}
			\max\left \{ \begin{pmatrix}
				2s \\2\end{pmatrix},\left \lfloor \frac{s(n+d-s)}{2}  \right \rfloor  \right \}  &  \text{ if }  d\geq 2s-1, n\leq d+s;\\
			ds &  otherwise.
		\end{cases}$$
		If $2s$ is odd, then
		$$f(n,s,d)= \begin{cases}
			\max\left \{ \begin{pmatrix}
				2s \\2\end{pmatrix},\left \lfloor \frac{(s-\frac{3}{2} )(n+d-s+\frac{3}{2} )}{2}  \right \rfloor +3 \right \}  &  \text{ if } d\geq 2s-1, n\leq d+s-\frac{3}{2};\\
			\max \left \{\begin{pmatrix}  2s \\2\end{pmatrix}, d(s-\frac{3}{2})+3\right \}  & \text{ if } d\geq 2s-1, n\geq d+s-\frac{3}{2};\\
			\left \lfloor ds  \right \rfloor & \text{ if } d\leq 2s-1.
		\end{cases}$$
	\end{theorem}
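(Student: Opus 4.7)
My plan is to combine LP-duality between fractional matchings and fractional vertex covers with the classical half-integrality theorem, then match the resulting upper bound with explicit constructions, handling the two parity cases separately. Write $s = v^{*}(G)$. The starting point is the fact that $G$ admits an optimal fractional vertex cover $w^{*}\colon V(G) \to \{0, \tfrac12, 1\}$ with $\sum_v w^{*}(v) = s$ and, dually, an optimal fractional matching $f^{*}$ whose support decomposes into a matching $M$ (weight-one edges) together with vertex-disjoint odd cycles $C_1, \dots, C_k$ (weight-$\tfrac12$ edges), satisfying $s = |M| + \tfrac12 \sum_i |C_i|$. The parity of $2s$ forces $k$ to be even (possibly zero) or odd, respectively, so the semi-integer case must contain at least one odd cycle in its structural decomposition.

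The universal upper bound comes from
\[
e(G) = \sum_{uv \in E(G)} 1 \le \sum_{uv \in E(G)} \bigl(w^{*}(u) + w^{*}(v)\bigr) = \sum_v w^{*}(v)\, d(v) \le d \sum_v w^{*}(v) = ds,
\]
and $e(G) \in \mathbb{Z}$ upgrades this to $e(G) \le \lfloor ds \rfloor$. This already yields the ``otherwise'' branch when $2s$ is even and the ``$d \le 2s-1$'' branch when $2s$ is odd. In the remaining regime $d \ge 2s-1$ I would sharpen the bound by partitioning $V(G) = V_0 \cup V_{1/2} \cup V_1$ according to the value of $w^{*}$: no edges lie inside $V_0$ or between $V_0$ and $V_{1/2}$; complementary slackness ties the three classes to $M$ and to the odd cycles $C_i$; and a careful sum-of-degrees accounting under $\Delta(G) \le d$ produces $\lfloor s(n+d-s)/2 \rfloor$ in the integer case. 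In the semi-integer case an unavoidable odd cycle surfaces as the additive ``$+3$'', with the corresponding loss of $\tfrac32$ in the other factor.

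For the lower bound I would exhibit two families of extremal graphs. The first, $K_{2s}$ together with $n-2s$ isolated vertices, realises the $\binom{2s}{2}$ term and is available exactly when $d \ge 2s-1$. The second is (for $2s$ even) a join-type graph built from a clique on $s$ vertices together with an auxiliary graph of controlled maximum degree on the remaining $n-s$ vertices, in which the clique itself serves as a vertex cover so that $v^{*} \le s$, while an explicit fractional matching achieves equality; for $2s$ odd the same recipe is applied after first putting aside a disjoint triangle, which contributes exactly $\tfrac32$ to $v^{*}$ and the three edges responsible for the ``$+3$''. Counting edges in each family and comparing with $\binom{2s}{2}$ yields the ``$\max$'' appearing in the theorem statement.

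The main obstacle, I expect, is the semi-integer case. Two issues intertwine: proving that the parity obstruction truly forces the ``$-\tfrac32$, $+3$'' correction---so that no cleverer graph using a longer odd cycle does strictly better---and tracking the floor functions together with the threshold conditions $n \le d + s - \tfrac32$ versus $n \ge d + s - \tfrac32$ so that the three piecewise formulas agree at the boundaries. A secondary but nontrivial task is verifying, for each constructed graph, that $v^{*}$ equals exactly $s$ rather than only being bounded above by $s$; this is where the matching/odd-cycle structure of $f^{*}$ must be pinned down explicitly.
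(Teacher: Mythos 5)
This statement is Theorem 1.2 of the paper, which is quoted from Ma, Qian and Shi \cite{Ma} as background; the paper itself gives no proof of it, so there is no in-paper argument to compare yours against. Judged on its own terms, your outline follows the natural (and, in spirit, the original) route: LP duality $v^{*}=\tau^{*}$, half-integrality of an optimal fractional vertex cover $w^{*}$ and of an optimal fractional matching (a matching plus vertex-disjoint odd cycles), the universal bound $e(G)\le\sum_{v}w^{*}(v)d(v)\le ds$, hence $e(G)\le\lfloor ds\rfloor$, and a partition $V_0\cup V_{1/2}\cup V_1$ with no edges inside $V_0$ or between $V_0$ and $V_{1/2}$. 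The parity observation (that $2s$ odd forces an odd cycle, hence $V_{1/2}\neq\emptyset$) is also correct.

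The genuine gap is that the entire difficult part of the theorem is left as ``a careful sum-of-degrees accounting.'' Concretely, in the regime $d\ge 2s-1$, $n\le d+s$ you must prove, with $|V_1|=a$ and $|V_{1/2}|=2(s-a)$, a bound such as $e(G)\le \frac12\bigl(ad+a(n-a)\bigr)+\binom{2(s-a)}{2}$ (obtained by splitting $2e(G[V_1])+e(V_1,V\setminus V_1)\le ad$ and $e(V_1,V\setminus V_1)\le a(n-a)$), then argue convexity in $a$ so only the endpoints $a=0$ and $a=s$ (resp.\ $a=s-\frac32$ when $2s$ is odd) survive, which is exactly where $\binom{2s}{2}$, $\lfloor s(n+d-s)/2\rfloor$ and the ``$-\frac32$, $+3$'' correction come from; none of this is carried out, and you yourself flag it as the main obstacle. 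In addition, for the branches with value $ds$ or $\lfloor ds\rfloor$ you only supply the upper bound: the matching constructions (e.g.\ a $d$-regular or near-$d$-regular graph on $2s$ vertices plus isolated vertices when $d\le 2s-1$, or an independent set of $s$ vertices of degree $d$ with neighborhoods spread to satisfy Hall's condition when $n>d+s$) are not given, and verifying $v^{*}=s$ exactly for them is part of the work. So the proposal is a sound plan, but as written it asserts rather than proves the two steps on which the theorem actually rests.
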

	It is natural to consider graphs with given fractional matching number and minimum degree. Recently, Shi and Ma \cite{Shi} determined the maximum size of an $n$-vertex  graph with a given fractional matching number.
	
	\begin{notation}
		Let $n$, $2s$ and $\ell$ be positive integers with $n \geq 2s+1\geq5$ and $\ell\geq 2$. Given a positive integer $\delta$, for any integer $t$ with $\delta \le t\le s$, denote by $G(n,s,t)$ the graph obtained from $K_{t}\vee\left(K_{2s-2t}+\overline{K_{n+t-2s}}\right)$ by deleting $t-\delta$ edges that are incident to one common vertex $u$ in $\overline{K_{n+t-2s}}$.
		Denote by $g_{\ell}(n,s,t)$ the number of copies of $K_{\ell}$ in $G(n,s,t)$.
	\end{notation}
	Let $A$ be the set of vertices whose degree is at most $t$, $C$ be the set of vertices whose degree is at least $n-2$ in $G(n,s,t)$ and let $B=V(G(n,s,t))\setminus (A\cup C)$. Note that the number of copies of $K_{\ell}$ in $B\cup C$ is $\begin{pmatrix}2s-t\\\ell\end{pmatrix}$, the number of copies of $K_{\ell}$ that contains a vertex in $A\setminus\left\{u\right\}$ and does not contain the vertex $u$ is $\begin{pmatrix}t\\\ell-1\end{pmatrix}(n-2s+t-1)$ and the number of copies of $K_{\ell}$ that contains the vertex $u$ is $\begin{pmatrix}\delta\\\ell-1\end{pmatrix}$. Therefore,
	\begin{equation}\label{1}
		\begin{aligned}
			g_{\ell}(n,s,t)=\begin{pmatrix}2s-t\\\ell\end{pmatrix}+\begin{pmatrix}t\\\ell-1\end{pmatrix}(n+t-2s-1)+\begin{pmatrix}\delta \\\ell-1\end{pmatrix}.
		\end{aligned}
	\end{equation}

	\begin{theorem}(Shi and Ma \cite{Shi})
		Let $n$, $2s$ be positive integers with $n\geq 2s+1\geq 5$. Let $G$ be a graph of order $n$ with fractional matching number $s$ and minimum degree at least one. \\
		If $2s$ is even, then
		$$e(G)\leq \max\left\{	\begin{pmatrix}2s-2\\2\end{pmatrix}+n-1, \begin{pmatrix}s\\2\end{pmatrix}+s(n-s)\right\}.$$
		If $2s$ is odd, then
		$$e(G)\leq \max\left\{\begin{pmatrix}2s-2\\2\end{pmatrix}+n-1,\begin{pmatrix}s-\frac{3}{2}\\2\end{pmatrix}+3+(s-\frac{3}{2})(n-s+\frac{3}{2})\right\}.$$
	\end{theorem}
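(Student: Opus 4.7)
My plan is to exploit the LP duality between fractional matching and fractional vertex cover, which gives $\tau^*(G)=v^*(G)=s$. The fractional vertex cover LP on a graph admits a half-integral optimum $g\colon V(G)\to\{0,\tfrac{1}{2},1\}$ (a classical fact obtainable from the bipartite double cover trick together with K\"onig's theorem, or directly from the Nemhauser--Trotter theorem), so I fix such a $g$ and partition $V(G)=V_0\sqcup V_{1/2}\sqcup V_1$ by its values. Writing $a=|V_1|$ and $b=|V_{1/2}|$, the cover value $a+b/2=s$ forces $b=2s-2a$.

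Since $g$ is a vertex cover, every edge $uv$ of $G$ satisfies $g(u)+g(v)\geq 1$, which forbids edges inside $V_0$ and edges between $V_0$ and $V_{1/2}$. Counting only the allowed edges,
\[
e(G)\leq \binom{a}{2}+a(n-a)+\binom{b}{2}=:h(a).
\]
The minimum-degree hypothesis now pins down the admissible range of $a$. A vertex in $V_0$ can only have neighbors in $V_1$; since $n\geq 2s+1>2s\geq a+b$ forces $V_0\neq\emptyset$, the condition $\delta(G)\geq 1$ gives $a\geq 1$. When $2s$ is odd, $b$ is odd and the case $b=1$ must be ruled out by the optimality of $g$: if $V_{1/2}=\{v\}$, then all neighbors of $v$ lie in $V_1$, so resetting $g(v)=0$ still yields a feasible cover but of value $s-\tfrac{1}{2}$, contradicting $\tau^*(G)=s$. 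Hence $b\geq 3$ in the odd case, i.e.\ $a\leq s-\tfrac{3}{2}$.

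A short computation gives
\[
h(a)=\tfrac{1}{2}\bigl(3a^2+(2n-8s+1)a+4s^2-2s\bigr),
\]
so $h$ is strictly convex in $a$ and attains its maximum over any integer interval at one of its endpoints. In the even case, the endpoints $a=1$ and $a=s$ yield $h(1)=\binom{2s-2}{2}+n-1$ and $h(s)=\binom{s}{2}+s(n-s)$; in the odd case, the endpoints $a=1$ and $a=s-\tfrac{3}{2}$ yield $h(1)=\binom{2s-2}{2}+n-1$ and $h(s-\tfrac{3}{2})=\binom{s-3/2}{2}+(s-\tfrac{3}{2})(n-s+\tfrac{3}{2})+3$. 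Since $e(G)\leq h(a)\leq\max\{h(a_{\min}),h(a_{\max})\}$, the claimed bounds follow in both parities.

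The main obstacle I anticipate is not the convexity bookkeeping but the structural setup: securing the half-integral optimal cover $g$ and, in the odd case, actually carrying out the optimality swap that excludes $b=1$, since this is the one step that simultaneously invokes the minimum-degree assumption and the tightness of the LP duality. A closely related sharpness check is that $\binom{b}{2}$ is compatible with the forced equality $v^*(G[V_{1/2}])=b/2$; for the relevant boundary cases this is immediate (e.g.\ $v^*(K_3)=\tfrac{3}{2}$ for $b=3$), and the identified extremal graphs sit inside the family $G(n,s,t)$ already introduced in the notation.
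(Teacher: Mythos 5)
Your argument is correct, and it reaches the bound by a genuinely different route than the machinery in this paper. You work on the dual side: strong LP duality gives $\tau^*(G)=v^*(G)=s$, a half-integral optimal cover yields the partition $V_0\sqcup V_{1/2}\sqcup V_1$, the cover constraints kill all edges inside $V_0$ and between $V_0$ and $V_{1/2}$, and then $e(G)\le\binom{a}{2}+a(n-a)+\binom{2s-2a}{2}$ is maximized by convexity in $a=|V_1|$ over $1\le a\le s$ (even case) or $1\le a\le s-\tfrac32$ (odd case); I checked the expansion of $h(a)$ and the endpoint values, and they match the stated bounds. The paper instead treats this statement as a cited result of Shi and Ma and, for its generalization, works on the primal/structural side: the fractional Tutte--Berge formula embeds $G$ into $K_t\vee(K_{2s-2t}+\overline{K_{n+t-2s}})$ with $2s-2t\neq 1$, after which it needs the comparison lemma with the families $\mathcal{F}_1(t),\mathcal{F}_2(t)$ (because it prescribes the minimum degree exactly) and convexity in $t$. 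The two viewpoints are dual descriptions of the same structure ($T\leftrightarrow V_1$, the isolated vertices of $G-T\leftrightarrow V_0$), and your LP-optimality swap ruling out $b=1$ is a clean substitute for the paper's parity restriction $2s-2t\neq 1$. What your route buys is economy: for $\delta\ge 1$ and edge counting, no analogue of the $\mathcal{F}_1/\mathcal{F}_2$ lemmas is needed; what the paper's route buys is that it extends to counting $K_\ell$ and $K_{r_1,r_2}$ with minimum degree exactly $\delta$, where the half-integral-cover edge count alone no longer suffices. One small inaccuracy in your closing commentary: the exclusion of $b=1$ uses only the optimality of the cover, not the minimum-degree hypothesis, which enters solely to force $a\ge 1$; this does not affect the validity of the proof.
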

	\begin{remark}
		The above equality holds if $\delta=1$ and $G=G(n,s,1)$ or
		$\delta=s$ and $G=G(n,s,s)$ when $2s$ is even, if $\delta=1$ and $G=G(n,s,1)$ or
		$\delta=s-\frac{3}{2}$ and $G=G(n,s,s-\frac{3}{2})$ when $2s$ is odd.
	\end{remark}
	Motivated by the above results, we determine the maximum number of copies of $K_{\ell}$ in $n$-vertex graphs with prescribed fractional matching number $s$ and minimum degree $\delta$.
	
	\begin{theorem}\label{kl}
		Let $n$, $2s$, $\delta$ and $\ell$ be positive integers with $n \geq 2s+1\geq5$ and $\ell \geq 2$. Let $G$ be a graph of order n with  fractional matching number $s$ and minimum degree $\delta$.\\
		If $2s$ is even, then
		$$N(K_{\ell},G)\leq \max\left\{g_{\ell}(n,s,\delta),g_{\ell}(n,s,s)\right\}.$$
		If 2s is odd, then
		$$N(K_{\ell},G)\leq \max \left\{g_{\ell}(n,s,\delta),g_{\ell}\left(n,s,s-\frac{3}{2}\right) \right\}.$$
	\end{theorem}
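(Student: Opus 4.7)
The plan is to exploit the half-integrality of optimal fractional vertex covers, decompose $N(K_\ell,G)$ along the induced tripartition, and finish with a discrete convexity argument. By LP-duality and Balinski's theorem, $G$ admits an optimal fractional vertex cover $g: V(G)\to\{0,\tfrac{1}{2},1\}$; writing $V_j := g^{-1}(j)$ and setting $t := |V_1|$ gives $|V_{1/2}| = 2s-2t$ and $|V_0| = n+t-2s$. The cover inequality forces $V_0$ to be independent, forbids edges between $V_0$ and $V_{1/2}$, and confines $N(v)\subseteq V_1$ for each $v \in V_0$. Since $V_0 \neq \emptyset$ (as $n \geq 2s+1$) and each $V_0$-vertex has degree between $\delta$ and $t$, we obtain $t \geq \delta$; moreover $t \leq s$ when $2s$ is even, and a short argument (an LP-optimum cannot have $|V_{1/2}|=1$ since such a vertex could be moved to $V_0$ for free) forces $|V_{1/2}|\geq 3$, hence $t \leq s-\tfrac{3}{2}$, when $2s$ is odd. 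Because $V_0$ is independent, every $K_\ell$-copy either lies in $V_1\cup V_{1/2}$ or has exactly one vertex $v\in V_0$ with the remaining $\ell-1$ vertices in $N(v)\subseteq V_1$, giving
\begin{equation*}
N(K_\ell,G) \leq \binom{2s-t}{\ell} + \sum_{v \in V_0} \binom{d(v)}{\ell-1}.
\end{equation*}

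I would then split into three cases according to where a vertex $v^*$ with $d(v^*)=\delta$ lies. If $v^*\in V_0$, isolating the term $\binom{\delta}{\ell-1}$ and bounding each of the other $V_0$-terms by $\binom{t}{\ell-1}$ yields $N(K_\ell,G)\leq g_\ell(n,s,t)$ at once. If $v^*\in V_{1/2}$, then $N(v^*)\subseteq V_1\cup V_{1/2}$, so $v^*$ has degree $\delta$ inside $V_1\cup V_{1/2}$, and counting $K_\ell$-copies in $V_1\cup V_{1/2}$ through and away from $v^*$ improves the first term to $\binom{2s-t-1}{\ell}+\binom{\delta}{\ell-1}$; the inequality $\binom{2s-t-1}{\ell-1}\geq\binom{t}{\ell-1}$, which holds because $t\leq s-1$ in the even case and $t\leq s-\tfrac{3}{2}$ in the odd case, absorbs the loss in the $V_0$-sum and again gives $N(K_\ell,G)\leq g_\ell(n,s,t)$. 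The case $v^*\in V_1$ is the principal obstacle: LP-optimality forces $\alpha := |N(v^*)\cap V_0|\geq 1$, so $v^*$ has only $\delta-\alpha$ neighbors in $V_1\cup V_{1/2}$, and the $V_0$-contribution has to be reorganized into cliques through $v\in V_0$ that avoid $v^*$ and cliques through both $v$ and $v^*$; using the observations that every $V_0$-vertex with $v^*\notin N(v)$ has degree $\leq t-1$, that cliques through both $v$ and $v^*$ use only common neighbors in $V_1$ (contributing at most $\alpha\binom{\delta-\alpha}{\ell-2}$ in total), and the binomial identity $\binom{\delta}{\ell-1}-\binom{\delta-\alpha}{\ell-1}\geq\alpha\binom{\delta-\alpha}{\ell-2}$, the discrepancies cancel and one obtains $N(K_\ell,G)\leq g_\ell(n,s,t)$ once more.

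Having reduced the problem to $g_\ell(n,s,t)\leq\max\{g_\ell(n,s,\delta),g_\ell(n,s,t_{\max})\}$ for $t\in[\delta,t_{\max}]$, where $t_{\max}=s$ in the even case and $t_{\max}=s-\tfrac{3}{2}$ in the odd case, the final step is a direct second-difference computation
\begin{equation*}
g_\ell(n,s,t+2)-2g_\ell(n,s,t+1)+g_\ell(n,s,t)=\binom{2s-t-2}{\ell-2}+2\binom{t+1}{\ell-2}+(n+t-2s-1)\binom{t}{\ell-3}\geq 0,
\end{equation*}
which shows $g_\ell(n,s,t)$ is convex in $t$, so its maximum over the interval is attained at an endpoint, matching the two candidates in the theorem. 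The hard part will be the delicate bookkeeping for the subcase $v^*\in V_1$: unlike in the $V_{1/2}$ case, vertices in $V_1$ may be adjacent to $V_0$, so the count in $V_1\cup V_{1/2}$ and the $V_0$-sum become coupled through $\alpha$, and the proof must simultaneously exploit the reduced degree $\delta-\alpha$ of $v^*$ and the degree drop of $V_0$-vertices that miss $v^*$.
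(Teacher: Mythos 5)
Your proposal is correct and takes essentially the same route as the paper: the half-integral optimal cover tripartition $(V_1,V_{1/2},V_0)$ is just the LP-dual form of the fractional Tutte--Berge decomposition used there, your three cases for the location of a minimum-degree vertex correspond exactly to $G(n,s,t)$, $\mathcal{F}_1(t)$ and $\mathcal{F}_2(t)$ with the same counting bounds as in Lemma 2.2 (your inequality $\binom{\delta}{\ell-1}-\binom{\delta-\alpha}{\ell-1}\geq\alpha\binom{\delta-\alpha}{\ell-2}$ checks out and yields the paper's bound $\binom{2s-t-1}{\ell}+\binom{\delta}{\ell-1}+\binom{t-1}{\ell-1}(n+t-2s)$), and your second-difference computation is exactly the convexity step of Proposition 2.5. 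The only difference is packaging: you count inside $G$ directly instead of first passing to the maximal host graphs, which changes nothing of substance.
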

	
	Theorem~\ref{kl} is sharp as shown by the following remark.
	
	\begin{remark}
		Equality in Theorem 1.6 holds if the following condition holds:\\
		$(1)$ If $G=G(n,s,\delta)$, $N(K_{\ell},G)=g_{\ell}(n,s,\delta)$;\\
		$(2)$ If $2s$ is even and $G=G(n,s,s)$, $N(K_{\ell},G)=g_{\ell}(n,s,s)$;\\
		$(3)$ If $2s$ is odd and $G=G\left(n,s,s-\frac{3}{2}\right)$, $N(K_{\ell},G)=g_{\ell}\left(n,s,s-\frac{3}{2}\right)$.
	\end{remark}
	
	Moreover, we find a lot of work on the maximum number of copies of $K_{r_{1},r_{2}};$ see \cite{wang,zlw,zll2,12}. In\cite{wang}, Wang determined the maximum number of copies of $K_{r_{1},r_{2}}$ in bipartite graphs with a given matching number. In\cite{zll2}, Zhang determined the maximum number of copies of $K_{r_{1},r_{2}}$ in an $n$-vertex graph with given maximum size of linear forest and the minimum degree. Motivated by their work, we determine the maximum number of copies of $K_{r_{1},r_{2}}$ with prescribed fractional matching number and minimum degree.
	
	\begin{notation}
		Let $n$, $2s$, $r_1$ and $r_2$ be positive integers with $n \geq 2s+1\geq5$. Denote by $g_{r_1,r_2}(n,s,t)$ the number of copies of $K_{r_1,r_2}$ in $G(n,s,t)$, where $G(n,s,t)$ is defined in Notation 1.3.
	\end{notation}
	Suppose that $r=r_1+r_2$.
	Let $c=1$ if $r_{1}\neq r_{2}$, and $c=2$  if $r_{1}=r_{2}$.
	Let $A$ be the set of vertices whose degree is at most $t$, $C$ be the set of vertices whose degree is at least $n-2$ in $G(n,s,t)$ and let $B=V(G(n,s,t))\setminus (A\cup C)$.
	Note that the number of copies of $K_{r_{1},r_{2}}$ in $B\cup C$ is $\frac{1}{c}\begin{pmatrix}2s-t\\r\end{pmatrix}\begin{pmatrix}r\\r_{1}\end{pmatrix}$, the number of copies of $K_{r_{1},r_{2}}$ containing the vertex $u$ is $\frac{1}{c}\sum\limits_{j=1}^{2}\begin{pmatrix}\delta\\r_{j}\end{pmatrix}\begin{pmatrix}n-r_j-1\\r-r_{j}-1\end{pmatrix}$ and 
	for one partite set of $K_{r_{1},r_{2}}$ is in $C$, the other partite set contains a vertex in $A\setminus \left\{u\right\}$ and does not contain the vertex $u$, the number of copies of $K_{r_{1},r_{2}}$ is $\frac{1}{c}\sum\limits_{j=1}^{2}\begin{pmatrix}t\\r_{j}\end{pmatrix}\left[\begin{pmatrix}n-r_{j}-1\\r-r_{j}\end{pmatrix}-\begin{pmatrix}2s-t-r_{j}\\r-r_{j}\end{pmatrix}\right]$. Therefore,
	\begin{equation}\label{2}
		\begin{aligned}
			g_{r_1,r_2}(n,s,t)=&\frac{1}{c}\sum\limits_{j=1}^{2}\begin{pmatrix}t\\r_{j}\end{pmatrix}\left[\begin{pmatrix}n-r_{j}-1\\r-r_{j}\end{pmatrix}-\begin{pmatrix}2s-t-r_{j}\\r-r_{j}\end{pmatrix}\right]+ \frac{1}{c}\sum\limits_{j=1}^{2}\begin{pmatrix}\delta\\r_{j}\end{pmatrix}\begin{pmatrix}n-r_j-1\\r-r_{j}-1\end{pmatrix}\\
			&+\frac{1}{c}\begin{pmatrix}2s-t\\r\end{pmatrix}\begin{pmatrix}r\\r_{1}\end{pmatrix}.
		\end{aligned}
	\end{equation}
	
	\begin{theorem}\label{kr}
		Let $n$, $2s$, $\delta$, $r_1$ and $r_2$ be positive integers with $n \geq 2s+1\geq5$. Let $G$ be a graph of order $n$ with fractional matching number $s$ and minimum degree $\delta$.\\
		If $2s$ is even, then
		$$N(K_{r_{1},r_{2}},G)\leq \max\left\{ g_{r_1,r_2}(n,s,\delta),g_{r_1,r_2}(n,s,s)\right\}.$$
		If $2s$ is odd, then
		$$N(K_{r_{1},r_{2}},G)\leq \max\left\{ g_{r_1,r_2}(n,s,\delta),g_{r_1,r_2}\left(n,s,s-\frac{3}{2}\right)\right\}.$$
	\end{theorem}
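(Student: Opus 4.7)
The plan is to follow the same strategy as in the proof of Theorem~\ref{kl}, with the clique count replaced by the $K_{r_{1},r_{2}}$ count. The first step is structural. Using the half-integral theorem for fractional matchings (equivalently, the existence of an optimal fractional vertex cover with values in $\{0,\tfrac{1}{2},1\}$), partition $V(G)=V_{1}\cup V_{1/2}\cup V_{0}$ according to the cover values. Setting $t=|V_{1}|$, the identity $|V_{1}|+\tfrac{1}{2}|V_{1/2}|=s$ gives $|V_{1/2}|=2s-2t$ and $|V_{0}|=n-2s+t$. The LP conditions force $V_{0}$ to be independent, every neighbor of $V_{0}$ to lie in $V_{1}$, and every component of $G[V_{1/2}]$ to be factor-critical (hence of odd order). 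Because every vertex of $V_{0}$ has degree at most $t$, the minimum-degree hypothesis yields $\delta\leq t\leq s$, together with the half-integer endpoint $t=s-\tfrac{3}{2}$ available when $2s$ is odd (corresponding to a single $K_{3}$-component of $V_{1/2}$).

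Next, I would reduce to edge-maximal graphs with the above parameters: making $G[V_{1}]$ a clique, joining $V_{1}$ completely to $V_{1/2}\cup V_{0}$, turning $G[V_{1/2}]$ into the unique maximal graph consistent with its factor-critical structure (a clique when $|V_{1/2}|$ is odd, or $K_{3}$ when $t=s-\tfrac{3}{2}$), and reducing the degree of exactly one vertex $u\in V_{0}$ to $\delta$ to satisfy $\delta(G)=\delta$. This is precisely $G(n,s,t)$. Decomposing the copies of $K_{r_{1},r_{2}}$ by the position of their two parts (entirely in $V_{1}\cup V_{1/2}$; containing $u$; or meeting $V_{0}\setminus\{u\}$ but avoiding $u$) reproduces exactly the three summands in~(\ref{2}), giving $N(K_{r_{1},r_{2}},G)\leq g_{r_{1},r_{2}}(n,s,t)$.

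It then remains to show that the discrete function $t\mapsto g_{r_{1},r_{2}}(n,s,t)$ attains its maximum on $\{\delta,\delta+1,\ldots,s\}$ (respectively on the corresponding range when $2s$ is odd) at an endpoint. I would prove this by establishing non-negativity of the second finite difference. Using Pascal's identity to rewrite $\binom{t}{r_{j}}$, $\binom{2s-t-r_{j}}{r-r_{j}}$, and $\binom{2s-t}{r}$ in~(\ref{2}), one checks that $g_{r_{1},r_{2}}(n,s,t+1)-2g_{r_{1},r_{2}}(n,s,t)+g_{r_{1},r_{2}}(n,s,t-1)\geq 0$ for $n\geq 2s+1$, so that $g_{r_{1},r_{2}}$ is convex in $t$ and its maximum on the interval is attained at an endpoint, producing the stated bound.

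The main obstacle is this final convexity verification. Unlike the clique case in~(\ref{1}), where the $t$-dependence reduces to a single dominant binomial product, the expression~(\ref{2}) is a sum over $j\in\{1,2\}$ of products of four binomials, and the two $t$-dependent factors $\binom{t}{r_{j}}$ and $\binom{2s-t-r_{j}}{r-r_{j}}$ interact nontrivially; checking convexity requires a careful case split according to the relative sizes of $r_{1}$, $r_{2}$, $\delta$, and $2s-t$. A secondary subtlety is the odd-$2s$ case: the factor-critical constraint on $G[V_{1/2}]$ forbids $t=s-1$ and selects $t=s-\tfrac{3}{2}$ as the correct upper endpoint, and one must confirm that no alternative factor-critical configuration on $V_{1/2}$ can beat the $K_{3}$-component choice.
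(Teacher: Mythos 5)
Your overall strategy coincides with the paper's: obtain the structure $G\subseteq K_t\vee(K_{2s-2t}+\overline{K_{n+t-2s}})$ with $\delta\le t\le s$ and $2s-2t\neq 1$ from fractional matching duality, reduce to an edge-maximal configuration, and finish by showing $t\mapsto g_{r_1,r_2}(n,s,t)$ is convex so its maximum is at $t=\delta$ or $t=s$ (resp.\ $t=s-\frac{3}{2}$). The convexity you defer is exactly the paper's Lemma~2.7 and Proposition~\ref{convex2}, and it does go through by repeated use of Pascal's rule plus the inequality $2s-t\ge t$; promising it is acceptable as a sketch, though you have not carried it out.

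The genuine gap is in your reduction step ``reducing the degree of exactly one vertex $u\in V_0$ to $\delta$\dots this is precisely $G(n,s,t)$.'' This presumes that the vertex of minimum degree $\delta$ lies in the independent part $V_0$. Nothing forces that: the degree-$\delta$ vertex of the extremal graph may lie in the middle part (playing the role of $K_{2s-2t}$) or in the cut set $T$ of size $t$. In those cases $G$ is not a subgraph of $G(n,s,t)$ but of a member of the paper's families $\mathcal{F}_1(t)$ or $\mathcal{F}_2(t)$ (all edges present except $2s-t-1-\delta$, resp.\ $n-1-\delta$, edges deleted at one vertex of $K_{2s-2t}$, resp.\ $K_t$), and the bound $N(K_{r_1,r_2},G)\le g_{r_1,r_2}(n,s,t)$ requires proving $N(K_{r_1,r_2},G(n,s,t))\ge NM(K_{r_1,r_2},\mathcal{F}_i(t))$. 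That comparison is the bulk of the paper's proof of Theorem~\ref{kr} (Lemma~2.6), it involves a nontrivial binomial inequality and a separate argument for $\mathcal{F}_2(t)$ at $t=s$, and your plan omits it entirely; without it the claimed inequality simply does not follow. A secondary inaccuracy: your assertion that the components of $G[V_{1/2}]$ are factor-critical (hence odd) is false in general --- e.g.\ $K_4$ with the all-$\frac12$ optimal cover --- and is also unnecessary, since for the upper bound one only passes to the supergraph $K_t\vee(K_{2s-2t}+\overline{K_{n+t-2s}})$ with the sole restriction $2s-2t\neq 1$; as written, your description of the odd-$2s$ endpoint conflates the construction achieving equality with the structure forced on an arbitrary extremal graph.
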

	
	Theorem~\ref{kr} is sharp as shown by the following remark.
	
	\begin{remark}
		Equality in Theorem 1.9 holds if the following condition holds:\\
		$(1)$ If $G=G(n,s,\delta)$, $N(K_{r_{1},r_{2}},G)=g_{r_1,r_2}(n,s,\delta)$;\\
		$(2)$ If $2s$ is even and $G=G(n,s,s)$, $N(K_{r_{1},r_{2}},G)=g_{r_1,r_2}(n,s,s)$;\\
		$(3)$ If $2s$ is odd and $G=G\left(n,s,s-\frac{3}{2}\right)$, $N(K_{r_{1},r_{2}},G)=g_{r_1,r_2}\left(n,s,s-\frac{3}{2}\right)$.
	\end{remark}
	
	\section{Proof of the results}
	To prove Theorem \ref{kl} and Theorem \ref{kr}, we first need a well-known result, called fractional Tutte-Berge formula.
	\begin{theorem}\cite{Esch}
		Let $G$ be a graph of order $n$. Then
		$$v^{*}(G)=\frac{1}{2} \left(n-\max_{T\subseteq V(G)}\left\{i(G-T)-\left |  T\right | \right \}\right)$$
		where $i(G-T)$ is the number of isolated vertices in $G-T$.
	\end{theorem}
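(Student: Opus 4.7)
The plan is to establish both inequalities in the formula separately. The easy direction, $\max_{T}\{i(G-T)-|T|\}\le n-2v^{*}(G)$, follows from a direct counting argument applied to an optimal fractional matching. The sharpness direction is obtained by exhibiting a specific $T^{*}$ arising from an optimal half-integral fractional vertex cover; establishing this half-integrality is the only nontrivial ingredient.

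For the upper bound, fix any $T\subseteq V(G)$ and let $f$ be a maximum fractional matching. The restriction of $f$ to $E(G-T)$ is a fractional matching of $G-T$, while the discarded weight satisfies
\[
\sum_{e\cap T\ne\emptyset} f(e)\;\le\;\sum_{v\in T}\sum_{e\ni v}f(e)\;\le\;|T|,
\]
so $v^{*}(G)\le v^{*}(G-T)+|T|$. Since isolated vertices of $G-T$ contribute $0$ to any fractional matching of $G-T$, summing the per-vertex constraint $\sum_{e\ni v}f'(e)\le 1$ gives $v^{*}(G-T)\le \tfrac{1}{2}(n-|T|-i(G-T))$. Combining the two bounds yields $i(G-T)-|T|\le n-2v^{*}(G)$ for every $T$.

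For the reverse direction I would use LP duality to identify $v^{*}(G)$ with the fractional vertex cover number $\tau^{*}(G)$, and then invoke the classical fact that the fractional vertex cover polytope has half-integral vertices. This produces an optimal cover $y^{*}:V(G)\to\{0,\tfrac{1}{2},1\}$; letting $Y_{i}=(y^{*})^{-1}(i)$, the edge constraint $y^{*}(u)+y^{*}(v)\ge 1$ forces $Y_{0}$ to be independent in $G$ and forbids any edge between $Y_{0}$ and $Y_{1/2}$. Hence every vertex of $Y_{0}$ is isolated in $G-Y_{1}$, and setting $T^{*}=Y_{1}$ gives
\[
i(G-T^{*})-|T^{*}|\;\ge\;|Y_{0}|-|Y_{1}|\;=\;n-|Y_{1/2}|-2|Y_{1}|\;=\;n-2\tau^{*}(G)\;=\;n-2v^{*}(G),
\]
which matches the upper bound.

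The main obstacle is the half-integrality of the fractional vertex cover polytope, which I would handle by the standard bipartite-double-cover reduction: form $\widetilde{G}$ on two disjoint copies of $V(G)$ with edges $(u,1)(v,2)$ and $(v,1)(u,2)$ for each $uv\in E(G)$, apply K\"onig's theorem to obtain an integral minimum vertex cover of $\widetilde{G}$, and project back to $G$ by averaging the two halves. Since the cited reference \cite{Esch} already packages this circle of ideas (including the half-integrality of $v^{*}(G)$ noted after the definition of fractional matching), the authors are justified in invoking Theorem~2.1 as a black box rather than reproving it.
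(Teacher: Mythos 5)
Your proposal is correct, but note that the paper itself offers no proof of this statement: Theorem~2.1 is quoted verbatim from the cited book of Scheinerman and Ullman and used as a black box, so there is no in-paper argument to compare against. Your two directions both check out. The counting step $v^{*}(G)\le v^{*}(G-T)+|T|$ together with $2v^{*}(G-T)\le (n-|T|)-i(G-T)$ gives $i(G-T)-|T|\le n-2v^{*}(G)$ for every $T$, which is exactly the inequality $v^{*}(G)\le\frac12\bigl(n-\max_{T}\{i(G-T)-|T|\}\bigr)$. For the reverse inequality, LP strong duality gives $v^{*}(G)=\tau^{*}(G)$ (both programs are feasible and bounded), half-integrality yields an optimal cover with level sets $Y_{0},Y_{1/2},Y_{1}$, and the edge constraints indeed force every neighbor of a $Y_{0}$-vertex to lie in $Y_{1}$, so $T^{*}=Y_{1}$ gives $i(G-T^{*})-|T^{*}|\ge |Y_{0}|-|Y_{1}|=n-2\tau^{*}(G)$; the arithmetic is right. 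The bipartite-double-cover argument for half-integrality (K\"onig plus weak duality to get an integral optimum for $\widetilde{G}$, then averaging the two copies) is the standard one and is essentially what the cited reference does; an equivalent alternative would be to use the half-integrality of maximum fractional matchings (already mentioned in the paper after the definition of $v^{*}$) and extract $T$ from the structure of a half-integral optimum, but your cover-based route is cleaner. In short: a correct, self-contained proof of a result the paper only cites.
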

	The following Pascal's Rule is useful throughout our proof. For any positive integers $n$, $m$ with $n\ge m$, we have $\begin{pmatrix}n+1\\m\end{pmatrix}=\begin{pmatrix}n\\m\end{pmatrix}+\begin{pmatrix}n\\m-1\end{pmatrix}$.
	
	Let $n$, $2s$ be positive integers with $n \geq 2s+1\geq5$.
	Given a positive integer $\delta$, for any integer $t$ with $\delta \le t\le  s  $,
	we denote
	\[
	\mathcal{F}_1(t)=\{K_t\vee (K_{2s-2t}+ \overline{K_{n+t-2s}})-E_1\ |\
	E_1\subseteq \Gamma(v),|E_1|=2s-t-1-\delta,~\text{where} ~v\in V(K_{2s-2t})\}
	\]
	and
	\[
	\mathcal{F}_2(t)=\{K_t\vee (K_{2s-2t}+ \overline{K_{n+t-2s}})-E_2\ |\ E_2\subseteq \Gamma(v),\ |E_2|=n-1-\delta,~\text{where}~v\in V(K_t)\}.
	\]
	
	To prove Theorem~\ref{kl}, we need the following lemma and proposition.
	\begin{lemma}
		Let $n$, $2s$ and $\ell$ be positive integers with $n \geq 2s+1\geq5$. Given a positive integer $\delta$, we have $$N(K_{\ell},G(n,s,t))\ge NM(K_{\ell},\mathcal{F}_1(t))~~\text{for}~~\delta \le t \le s-1 $$
		and $$N(K_{\ell},G(n,s,t))\ge NM(K_{\ell},\mathcal{F}_2(t))~~\text{for}~~\delta \le t \le  s.$$
	\end{lemma}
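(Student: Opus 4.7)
The plan is to reduce both inequalities to routine binomial manipulations via Pascal's rule, after first giving an exact (respectively, upper-bound) expression for $N(K_\ell,H)$ for a generic $H$ in each family. In each case I split the $K_\ell$'s of $H$ according to whether they contain the distinguished vertex $v$ from which edges have been deleted, and count the cliques in $H-v$ by a decomposition analogous to the one preceding (\ref{1}).

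For $\mathcal{F}_1(t)$, fix $H$ with $v\in V(K_{2s-2t})$. In the base graph, the $2s-t-1$ neighbors of $v$, namely $V(K_t)\cup(V(K_{2s-2t})\setminus\{v\})$, already induce a $K_{2s-t-1}$, so any subset of the $\delta$ retained neighbors of $v$ induces a clique in $H$. Hence the number of $K_\ell$'s containing $v$ is exactly $\binom{\delta}{\ell-1}$, independent of which edges are deleted. The cliques of $H-v=K_t\vee(K_{2s-2t-1}+\overline{K_{n+t-2s}})$ contribute $\binom{2s-t-1}{\ell}+\binom{t}{\ell-1}(n+t-2s)$ by the same case analysis used to derive (\ref{1}). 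Summing, $NM(K_\ell,\mathcal{F}_1(t))$ equals this exact sum. Subtracting from $g_\ell(n,s,t)$ and applying $\binom{2s-t}{\ell}=\binom{2s-t-1}{\ell}+\binom{2s-t-1}{\ell-1}$ reduces the target inequality to $\binom{2s-t-1}{\ell-1}\ge\binom{t}{\ell-1}$, which follows from monotonicity of $\binom{\cdot}{\ell-1}$ together with $2s-t-1\ge t$ for $t\le s-1$.

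For $\mathcal{F}_2(t)$, fix $H$ with $v\in V(K_t)$. The cliques of $H-v=K_{t-1}\vee(K_{2s-2t}+\overline{K_{n+t-2s}})$ contribute $\binom{2s-t-1}{\ell}+\binom{t-1}{\ell-1}(n+t-2s)$ copies of $K_\ell$. The cliques through $v$ number at most $\binom{\delta}{\ell-1}$, simply because $v$ has $\delta$ neighbors in $H$. Thus $NM(K_\ell,\mathcal{F}_2(t))$ is bounded above by the sum of these three terms. Forming $g_\ell(n,s,t)$ minus this bound and applying Pascal's rule both to $\binom{2s-t}{\ell}-\binom{2s-t-1}{\ell}$ and to $\binom{t}{\ell-1}-\binom{t-1}{\ell-1}$, the difference simplifies to
\[\left[\binom{2s-t-1}{\ell-1}-\binom{t-1}{\ell-1}\right]+(n+t-2s-1)\binom{t-1}{\ell-2},\]
which is nonnegative since $t\le s$ gives $2s-t-1\ge t-1$ and $n\ge 2s+1$ gives $n+t-2s-1\ge 0$.

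The whole argument is algebraic once the right bound on cliques through $v$ is written down; no optimization over graphs in the family is actually needed. The only real subtlety is the contrast between the two cases: in $\mathcal{F}_1(t)$ the count $N(K_\ell,H)$ is exact and independent of the deletion (because $N(v)$ already sits inside a clique), while in $\mathcal{F}_2(t)$ one must settle for the trivial upper bound $\binom{\delta}{\ell-1}$; fortunately the target inequality is loose enough to absorb this slack.
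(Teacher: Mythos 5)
Your proposal is correct and essentially coincides with the paper's argument: the paper likewise splits the cliques of a member of $\mathcal{F}_1(t)$ (resp.\ $\mathcal{F}_2(t)$) into those through the modified vertex $v$, counted (exactly, resp.\ at most) as $\binom{\delta}{\ell-1}$, and those avoiding $v$, counted as $\binom{2s-t-1}{\ell}+\binom{t}{\ell-1}(n+t-2s)$ (resp.\ with $\binom{t-1}{\ell-1}$), and then compares with $g_\ell(n,s,t)$ via Pascal's rule to get exactly your reduced inequalities $\binom{2s-t-1}{\ell-1}\ge\binom{t}{\ell-1}$ and $\binom{2s-t-1}{\ell-1}-\binom{t-1}{\ell-1}+(n+t-2s-1)\binom{t-1}{\ell-2}\ge 0$. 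Your observation that the count is constant over $\mathcal{F}_1(t)$ is a nice (correct) sharpening but does not change the argument.
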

	\begin{proof}[\rm{\textbf{Proof}}]
		Let $G_{i}(n,s,t)$ be the graph attaining the maximum number of copies of $K_\ell$ in $\mathcal{F}_i(t)$ for $i=1,2$. 
		
		First, we prove that $N(K_{\ell},G(n,s,t))\ge NM(K_{\ell},\mathcal{F}_1(t))$ for $\delta\le t\le s-1$. 
		Let $A_1$ be the set of vertices whose degree is at most $t$, $C_1$ be the set of vertices whose degree is at least $n-2$ in $G_1(n,s,t)$ and let $B_1=V(G_1(n,s,t))\setminus (A_1\cup C_1)$. 
		
		Note that the number of copies of $K_{\ell}$ in $B_1\cup C_1$ is $\begin{pmatrix}2s-t-1\\\ell\end{pmatrix}$, the number of copies of $K_{\ell}$ that contains a vertex in $A_1\setminus\left\{v\right\}$  is $\begin{pmatrix}t\\\ell-1\end{pmatrix}(n-2s+t)$ and the number of copies of $K_{\ell}$ that contains the vertex $v$ is $\begin{pmatrix}\delta\\\ell-1\end{pmatrix}$. Thus,
		
		$$N(K_{\ell},G_{1}(n,s,t))=\begin{pmatrix}2s-t-1\\\ell\end{pmatrix}+\begin{pmatrix}\delta\\\ell-1\end{pmatrix}+\begin{pmatrix}t\\\ell-1\end{pmatrix}(n-2s+t).$$
		Since $t\leq s-1$, combining with Eq.~\eqref{1}, we have
		$$N(K_{\ell},G(n,s,t))-N(K_{\ell},G_{1}(n,s,t))\geq\begin{pmatrix}2s-t-1\\\ell-1\end{pmatrix}-\begin{pmatrix}t\\\ell-1\end{pmatrix}\ge 0. $$
		
		Next we prove that $N(K_{\ell},G(n,s,t))\geq NM(K_{\ell},\mathcal{F}_2(t))$ for $\delta\le t\le s$.
		Let $A_2$ be the set of vertices whose degree is at most $t$, $C_2$ be the set of vertices whose degree is at least $n-2$ in $G_{2}(n,s,t)$ and let $B_2=V(G_{2}(n,s,t))\setminus (A_2\cup C_2)$.
		
		Recall that $v$ is the vertex with degree $\delta$ in $G_{2}(n,s,t)$. 
		Then the number of copies of $K_{\ell}$ that contains the vertex $v$ is at most $\begin{pmatrix}\delta\\\ell-1\end{pmatrix}$, the number of copies of $K_{\ell}$ that contains a vertex in $A_2\setminus \{v\}$ and does not contain the vertex $v$ is $\begin{pmatrix}t-1\\\ell-1\end{pmatrix}(n-2s+t)$,
		and the number of copies of $K_{\ell}$ that does not contain the vertex $v$ in $B_2\cup C_2$ is $\begin{pmatrix}2s-t-1\\\ell\end{pmatrix}$. Thus,
		$$N(K_{\ell},G_{2}(n,s,t))\le \begin{pmatrix}2s-t-1\\\ell\end{pmatrix}+\begin{pmatrix}\delta\\\ell-1\end{pmatrix}+\begin{pmatrix}t-1\\\ell-1\end{pmatrix}(n-2s+t).$$
		Therefore, combining with Eq.~\eqref{1}, we have
		\begin{align*}
			&\quad N(K_{\ell},G(n,s,t))-N(K_{\ell},G_{2}(n,s,t))\\
			&\ge \begin{pmatrix}2s-t-1\\\ell-1\end{pmatrix}-\begin{pmatrix}t-1\\\ell-1\end{pmatrix}
			+\left(\begin{pmatrix}t\\\ell-1\end{pmatrix}-\begin{pmatrix}t-1\\\ell-1\end{pmatrix}\right)(n-2s+t-1)\\
			&\ge 0
		\end{align*}
		where the last inequality follows as
		$s\geq t$ and $n\geq 2s+1$.
	\end{proof}
	
	\begin{lemma}
		Let $\ell$ and $2s$ be positive integers. For positive integer $t$ with $t\leq 2s$, $\begin{pmatrix}2s-t\\\ell\end{pmatrix}$ is a convex function of $t$.
	\end{lemma}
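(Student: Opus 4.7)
The plan is to verify discrete convexity directly: I want to show that for integers $t$ in the appropriate range, $f(t-1) + f(t+1) \geq 2 f(t)$, where $f(t) = \binom{2s-t}{\ell}$. This is the standard second-difference characterization of convexity for a function on the integers, and it is exactly what the paper will need when it compares $g_{\ell}(n,s,t)$ at the endpoints $t = \delta$ and $t = s$ (or $s - 3/2$) to bound it at intermediate values of $t$.

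The key step will be Pascal's rule, which the paper has already highlighted as a working tool. Applied to the descending index $2s-t$, it gives
\begin{equation*}
f(t-1) - f(t) = \binom{2s-t+1}{\ell} - \binom{2s-t}{\ell} = \binom{2s-t}{\ell-1}
\end{equation*}
and, shifting $t \to t+1$,
\begin{equation*}
f(t) - f(t+1) = \binom{2s-t}{\ell} - \binom{2s-t-1}{\ell} = \binom{2s-t-1}{\ell-1}.
\end{equation*}
Subtracting these two identities and applying Pascal's rule once more yields
\begin{equation*}
f(t-1) - 2f(t) + f(t+1) = \binom{2s-t}{\ell-1} - \binom{2s-t-1}{\ell-1} = \binom{2s-t-1}{\ell-2} \geq 0,
\end{equation*}
which is the desired inequality.

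The only thing to be careful about is the boundary behaviour: when $\ell \le 1$ the binomial $\binom{2s-t-1}{\ell-2}$ is interpreted as $0$, and when $2s-t-1 < \ell-2$ it is also $0$; in either case the inequality still reads $f(t-1)-2f(t)+f(t+1) \ge 0$, so convexity is preserved. Aside from this bookkeeping, there is no genuine obstacle, and the proof is essentially one application of Pascal's rule repeated twice.
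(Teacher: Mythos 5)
Your proof is correct: the second-difference computation $f(t-1)-2f(t)+f(t+1)=\binom{2s-t}{\ell-1}-\binom{2s-t-1}{\ell-1}=\binom{2s-t-1}{\ell-2}\ge 0$ via Pascal's rule is exactly the right argument, and your boundary remarks (interpreting binomials with small or negative lower index as zero) are fine since the second difference is only needed at interior points $t\le 2s-1$. The paper in fact states this lemma without proof, but your argument is precisely the discrete-convexity computation the authors carry out for the neighbouring lemma on $\binom{t}{\ell-1}(n+t-2s-1)$, so you are on the same track as the paper.
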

	
	\begin{lemma}
		Let $n$, $2s$ and $\ell$ be positive integers. For positive integer $t$, $f(t)= \begin{pmatrix}t\\\ell-1\end{pmatrix}(n+t-2s-1)$ is a convex function of $t$.
	\end{lemma}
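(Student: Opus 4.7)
The strategy I would follow is to express $f(t)$ as a nonnegative linear combination of two binomial coefficients of the form $\binom{t}{m}$, each of which is discretely convex in $t$, and then invoke the fact that a nonnegative combination of convex functions is convex. The one algebraic observation I need is the identity
\[
t\binom{t}{\ell-1} = \ell\binom{t}{\ell} + (\ell-1)\binom{t}{\ell-1},
\]
which follows at once from writing $t = (t-\ell+1) + (\ell-1)$ and using $(t-\ell+1)\binom{t}{\ell-1} = \ell\binom{t}{\ell}$. Substituting this into $f(t) = (n-2s-1)\binom{t}{\ell-1} + t\binom{t}{\ell-1}$ yields the clean decomposition
\[
f(t) = (n+\ell-2s-2)\binom{t}{\ell-1} + \ell\binom{t}{\ell}.
\]

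Under the standing hypotheses $n\geq 2s+1$ and $\ell\geq 2$ of the paper, both coefficients $n+\ell-2s-2 \geq \ell-1\geq 1$ and $\ell$ are positive, so it suffices to check that for every nonnegative integer $m$ the function $t\mapsto \binom{t}{m}$ is discretely convex. This is a double application of Pascal's rule:
\[
\binom{t+1}{m} - 2\binom{t}{m} + \binom{t-1}{m} = \binom{t}{m-1} - \binom{t-1}{m-1} = \binom{t-1}{m-2}\geq 0,
\]
using the convention that $\binom{a}{b}=0$ whenever $b<0$ or $a<b$. Combining the two ingredients, $f$ is a nonnegative linear combination of convex functions and is therefore convex in $t$.

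The entire argument is almost automatic once the decomposition is observed, so the main — and quite mild — obstacle is spotting the identity $t\binom{t}{\ell-1} = \ell\binom{t}{\ell} + (\ell-1)\binom{t}{\ell-1}$ that turns the product $\binom{t}{\ell-1}(n+t-2s-1)$ into a sum with nonnegative coefficients. An alternative, more brute-force route would be to compute the second difference $f(t+1)-2f(t)+f(t-1)$ directly and simplify it using Pascal's rule; that route eventually produces the same expression $(n+\ell-2s-2)\binom{t-1}{\ell-3} + \ell\binom{t-1}{\ell-2}\geq 0$, but I expect the decomposition approach to be noticeably cleaner and easier to present.
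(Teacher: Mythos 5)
Your proof is correct, and it takes a genuinely different (if closely related) route from the paper. The paper proves convexity head-on: it expands the second difference $f(t+1)+f(t-1)-2f(t)$ of the product and simplifies via Pascal's rule to $\bigl(\binom{t}{\ell-2}-\binom{t-1}{\ell-2}\bigr)(n+t-2s-1)+\binom{t+1}{\ell-1}-\binom{t-1}{\ell-1}\ge 0$ — essentially the ``brute-force'' alternative you mention at the end. You instead use the absorption identity $t\binom{t}{\ell-1}=\ell\binom{t}{\ell}+(\ell-1)\binom{t}{\ell-1}$ to rewrite $f(t)=(n+\ell-2s-2)\binom{t}{\ell-1}+\ell\binom{t}{\ell}$ and then reduce everything to the single fact that $\binom{t}{m}$ has nonnegative second difference $\binom{t-1}{m-2}$; your claimed second difference $(n+\ell-2s-2)\binom{t-1}{\ell-3}+\ell\binom{t-1}{\ell-2}$ indeed agrees with the paper's expression. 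One small merit of your version is that it makes explicit where the standing hypothesis $n\ge 2s+1$ (and $\ell\ge 2$) enters, namely in the nonnegativity of the coefficient $n+\ell-2s-2$; the paper's concluding ``$\ge 0$'' likewise silently uses $n+t-2s-1\ge 0$, which holds only because of that same standing assumption (the lemma as literally stated, for arbitrary positive integers $n,2s$, would fail without it). Both arguments are short; yours is slightly cleaner to present and isolates the convexity of binomial coefficients as the only computation.
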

	
	\begin{proof}[\rm{\textbf{Proof}}]
		By direct calculation, we have
		\begin{align*}
			&\quad f(t+1)+f(t-1)-2f(t)\\
			&=\begin{pmatrix}t+1\\\ell-1\end{pmatrix}(n+t-2s)+\begin{pmatrix}t-1\\\ell-1\end{pmatrix}(n+t-2s-2)-2\begin{pmatrix}t\\\ell-1\end{pmatrix}(n+t-2s-1)\\
			&=\left(\begin{pmatrix}t+1\\\ell-1\end{pmatrix}+\begin{pmatrix}t-1\\\ell-1\end{pmatrix}-2\begin{pmatrix}t\\\ell-1\end{pmatrix}\right)(n+t-2s-1)+\begin{pmatrix}t+1\\\ell-1\end{pmatrix}-\begin{pmatrix}t-1\\\ell-1\end{pmatrix}\\
			&=\left(\begin{pmatrix}t\\\ell-2\end{pmatrix}-\begin{pmatrix}t-1\\\ell-2\end{pmatrix}\right)(n+t-2s-1)+\begin{pmatrix}t+1\\\ell-1\end{pmatrix}-\begin{pmatrix}t-1\\\ell-1\end{pmatrix}\\
			&\geq 0.
		\end{align*}
		This implies that $f(t)$ is a convex function of $t$.
	\end{proof}
	By Lemmas~2.3 and 2.4, we have the following proposition.
	\begin{proposition}\label{convex1}
		Let $n$, $2s$, $\delta$ and $\ell$ be positive integers. For positive integer $t$ with $t\leq s  $, $$g_{\ell}(n,s,t)=\begin{pmatrix}2s-t\\\ell\end{pmatrix}+\begin{pmatrix}t\\\ell-1\end{pmatrix}(n+t-2s-1)+\begin{pmatrix}\delta \\\ell-1\end{pmatrix}$$
		is a convex function of $t$.
	\end{proposition}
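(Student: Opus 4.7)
The plan is to recognize that Proposition~\ref{convex1} is an immediate consequence of the two preceding lemmas, together with the elementary fact that any sum of convex functions of $t$ is again convex. I would verify this by checking the discrete second-difference criterion: a function $h(t)$ is convex in $t$ precisely when $h(t+1)+h(t-1)-2h(t)\ge 0$, and this inequality is preserved under addition.

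First I would decompose $g_{\ell}(n,s,t)$ into its three summands. By Lemma~2.3, the term $\binom{2s-t}{\ell}$ is a convex function of $t$ (the substitution $t\mapsto 2s-t$ reflects the argument but preserves convexity since the second difference is invariant under reflection). By Lemma~2.4, the term $\binom{t}{\ell-1}(n+t-2s-1)$ is a convex function of $t$. Finally, $\binom{\delta}{\ell-1}$ does not depend on $t$, so it is a constant, hence trivially convex (its second difference is $0$).

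I would then add the three second-difference inequalities to conclude
\[
g_{\ell}(n,s,t+1)+g_{\ell}(n,s,t-1)-2g_{\ell}(n,s,t)\ge 0,
\]
which is precisely the statement that $g_{\ell}(n,s,t)$ is convex in $t$. There is essentially no technical obstacle here because the heavy lifting has been carried out in Lemmas~2.3 and 2.4; the only thing to be a bit careful about is ensuring the hypotheses of those lemmas (in particular, the constraint $t\le s$ so that $2s-t\ge s \ge t$ and the binomial coefficients behave as expected) are in force, and noting that the constant term $\binom{\delta}{\ell-1}$ contributes nothing to convexity. The proof is therefore short: state the decomposition, cite Lemmas~2.3 and~2.4, and invoke closure of convex functions under addition.
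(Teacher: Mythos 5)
Your proposal matches the paper's argument exactly: the paper derives Proposition~2.5 directly from Lemma~2.3 (convexity of $\binom{2s-t}{\ell}$ in $t$) and Lemma~2.4 (convexity of $\binom{t}{\ell-1}(n+t-2s-1)$), with the constant $\binom{\delta}{\ell-1}$ contributing nothing, just as you describe. The decomposition, the citation of the two lemmas, and the closure of convexity under addition are precisely the paper's (implicit) proof, so there is nothing further to add.
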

	\begin{proof}[\rm{\textbf{Proof of Theorem~\ref{kl}}}]
		Let $G$ be a graph attaining the maximum number of copies of $K_\ell$ with fractional matching number $s$ and minimum degree $\delta$.
		
		By fractional Tutte-Berge formula, it is not hard to see that $G$ is a subgraph of $K_t\vee (K_{2s-2t}+ \overline{K_{n+t-2s}})$, with $t\le s$ and $2s-2t\neq 1$.
		Since $\delta(G)=\delta$, it is clear that $\delta\le t$, and  hence $G$ is a subgraph of $G(n,s,t)$, $G_{1}$ or $G_{2}$, where $G_{1}\in \mathcal{F}_1(t)$, $G_{2}\in \mathcal{F}_2(t)$. 
		Note that deleting any edge of a graph does not increase the number of copies of $K_\ell$. So we may assume that $G=G(n,s,t)$, $G\in \mathcal{F}_1(t)$ or
		$G\in \mathcal{F}_2(t)$.
		
		In particular, if $G\in \mathcal{F}_1(s)$, then $\delta=s$ and hence $G=G(n,s,s)$.
		By the maximality of $G$ and Lemma 2.2, we may assume that $G=G(n,s,t)$ for some positive integer $t$ with $\delta\leq t\leq s$.
		
		{\bf Case 1. $2s$ is even.}
		Now $s$ is a positive integer. By Proposition~\ref{convex1}, we have $t=s$ or $t=\delta$.
		If $t=s$, then $G=G(n,s,s)$ and hence $N(K_{\ell},G)=g_{\ell}(n,s,s).$
		If $t=\delta$, then $G=G(n,s,\delta)$ and hence $N(K_{\ell},G)=g_{\ell}(n,s,\delta).$
		
		{\bf Case 2. $2s$ is odd.}
		Since $t\neq s-\frac{1}{2}$, we have $\delta \leq t \leq s-\frac{3}{2}$. By Proposition~\ref{convex1}, we have $t=\delta$ or $t=s-\frac{3}{2}$.
		If $t=s-\frac{3}{2}$, then $G=G(n,s,s-\frac{3}{2})$ and hence $N(K_{\ell},G)=g_{\ell}(n,s,s-\frac{3}{2})$.
		If $t=\delta$, then $G=G(n,s,\delta)$ and hence $N(K_{\ell},G)=g_{\ell}(n,s,\delta)$.
		
		This completes the proof.
	\end{proof}
	Next, to prove Theorem~\ref{kr}, we need the following lemma and proposition.
	
	\begin{lemma}
		Let $n$, $2s$, $r_1$ and $r_2$ be  positive integers with $n \geq 2s+1\geq5$. Given a positive integer $\delta$, $$N(K_{r_1,r_2},G(n,s,t))\geq NM(K_{r_1,r_2},\mathcal{F}_1(t))~~\text{for}~~\delta \le t \le s-1 $$
		and $$N(K_{r_1,r_2},G(n,s,t))\geq NM(K_{r_1,r_2},\mathcal{F}_2(t))~~\text{for}~~\delta \le t \le s.$$
	\end{lemma}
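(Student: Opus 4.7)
The plan is to parallel the proof of Lemma~2.2 by replacing clique counts with the three-term decomposition of $g_{r_1,r_2}(n,s,t)$ in Eq.~\eqref{2}. For each $i\in\{1,2\}$, I fix a graph $G_i=G_i(n,s,t)\in\mathcal{F}_i(t)$ attaining $NM(K_{r_1,r_2},\mathcal{F}_i(t))$, let $v$ be its unique vertex of degree $\delta$, and let $A_i,B_i,C_i$ be the degree partition of $V(G_i)$ as in the proof of Lemma~2.2. In both cases $B_i\cup C_i$ is a clique of size $2s-t-1$, the set $A_i\setminus\{v\}$ is an independent set whose vertices are adjacent only to the hub, and $v$ has $\delta$ neighbors; the hub has $t_1:=t$ vertices for $i=1$ but only $t_2:=t-1$ vertices for $i=2$, since in the latter case the original hub vertex $v$ has been demoted into $A_2$.

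I then enumerate the copies of $K_{r_1,r_2}$ in $G_i$ by three disjoint classes matching the three summands of Eq.~\eqref{2}: (A) both parts inside the clique $B_i\cup C_i$, contributing exactly $\tfrac{1}{c}\binom{2s-t-1}{r}\binom{r}{r_1}$; (B) copies containing $v$, contributing at most $\tfrac{1}{c}\sum_{j=1}^{2}\binom{\delta}{r_j}\binom{n-r_j-1}{r-r_j-1}$ (the $r_j$ vertices opposite $v$ must lie in $N(v)$ of size $\delta$, and the trivial bound $\binom{n-r_j-1}{r-r_j-1}$ is used for the remaining vertices of $v$'s part); and (C) copies avoiding $v$ with one part inside the hub and the other containing a vertex of $A_i\setminus\{v\}$, contributing exactly $\tfrac{1}{c}\sum_{j=1}^{2}\binom{t_i}{r_j}\bigl[\binom{n-r_j-1}{r-r_j}-\binom{2s-t-1-r_j}{r-r_j}\bigr]$ by inclusion--exclusion. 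Subtracting from $g_{r_1,r_2}(n,s,t)$, class~(B) cancels and Pascal's rule applied to $\binom{2s-t}{r}$, $\binom{2s-t-r_j}{r-r_j}$, and (for $i=2$) to $\binom{t}{r_j}$ reduces the required nonnegativity to the inequality
\[
\binom{r}{r_1}\binom{2s-t-1}{r-1} \;\ge\; \binom{t_i}{r_1}\binom{2s-t-1-r_1}{r_2-1}+\binom{t_i}{r_2}\binom{2s-t-1-r_2}{r_1-1},
\]
together with, when $i=2$, a residual $\tfrac{1}{c}\sum_{j=1}^{2}\binom{t-1}{r_j-1}\bigl[\binom{n-r_j-1}{r-r_j}-\binom{2s-t-1-r_j}{r-r_j}\bigr]$ which is nonnegative because $n\ge 2s+1$.

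The main obstacle is the displayed binomial inequality. Setting $N=2s-t-1$, both constraints $t\le s-1$ (for $i=1$) and $t\le s$ (for $i=2$) yield $t_i\le N$, so $\binom{t_i}{r_j}\le\binom{N}{r_j}$. Using the classical identity $\binom{N}{r_j}\binom{N-r_j}{r-r_j-1}=\binom{N}{r-1}\binom{r-1}{r_j}$ together with $\binom{r-1}{r_1}+\binom{r-1}{r_2}=\binom{r}{r_1}$ (since $\binom{r-1}{r_2}=\binom{r-1}{r_1-1}$), the right-hand side is bounded above by $\binom{N}{r-1}\binom{r}{r_1}$, which is exactly the left-hand side. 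Equality in this pointwise bound is forced only when $i=2$ and $t=s$ (so that $t_i=N$); in that corner case the strict positivity of the $n$-dependent residual above (of order at least $n-2s\ge 1$) restores strict nonnegativity of the overall difference. Keeping track of the factor $1/c$, which records the collapse of ordered partite sets when $r_1=r_2$, requires care throughout the three classes but introduces no essential new difficulty.
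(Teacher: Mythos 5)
Your proposal is correct and follows essentially the same route as the paper's proof: the same degree partition of an extremal member of $\mathcal{F}_i(t)$, the same three-class enumeration matching Eq.~\eqref{2}, cancellation of the $\delta$-term, Pascal reductions, and the same key inequality $\binom{2s-t-1}{r-1}\binom{r}{r_1}\ge\sum_{j=1}^{2}\binom{t}{r_j}\binom{2s-t-r_j-1}{r-r_j-1}$, which you verify via the identity $\binom{N}{r_j}\binom{N-r_j}{r-r_j-1}=\binom{N}{r-1}\binom{r-1}{r_j}$ rather than the paper's factorial-ratio estimate, and you absorb the $t=s$ case of $\mathcal{F}_2(t)$ into the residual term instead of the paper's separate comparison of $G_2(n,s,s)-v$ with $G(n,s,s)-u$. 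The only (cosmetic) slip is that in your $i=2$ reduction the exact residual paired with the main term at $t-1$ is $\sum_{j=1}^{2}\binom{t-1}{r_j-1}\bigl[\binom{n-r_j-1}{r-r_j}-\binom{2s-t-r_j}{r-r_j}\bigr]$, not with $\binom{2s-t-1-r_j}{r-r_j}$ subtracted; this corrected residual is still nonnegative for $n\ge 2s+1$, so the argument goes through unchanged.
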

	
	\begin{proof}[\rm{\textbf{Proof}}]
		Let $G_{i}(n,s,t)$ be a graph with the maximum number of copies of $K_{r_1,r_2}$ in $\mathcal{F}_{i}(t)$ for $i=1,2$. 
		Let $r=r_{1}+r_{2}$. Consider the case $r_{1}\neq r_{2}$. 
		
		First we prove that $N(K_{r_1,r_2},G(n,s,t))\geq NM(K_{r_1,r_2},\mathcal{F}_1(t))$ for $\delta \le t \le s-1 $. Let $A_1$ be the set of vertices whose degree is at most $t$, $C_1$ be the set of vertices whose degree is at least $n-2$ in $G_{1}(n,s,t)$ and let $B_1=V(G_{1}(n,s,t))\setminus (A_1\cup C_1)$. We determine the value of $N(K_{r_1,r_2},G_{1}(n,s,t))$.
		Recall that $v$ is the vertex with minimum degree $\delta$ in $G_{1}(n,s,t)$. 
		Then the number of copies of $K_{r_{1},r_{2}}$ that contains the vertex $v$ is at most $\sum\limits_{j=1}^{2}\begin{pmatrix}\delta\\r_{j}\end{pmatrix}\begin{pmatrix}n-r_j-1\\r-r_{j}-1\end{pmatrix}.$ 
		In $B_1\cup C_1$, the number of copies of $K_{r_{1},r_{2}}$ is $\begin{pmatrix}2s-t-1\\r\end{pmatrix}\begin{pmatrix}r\\r_{1}\end{pmatrix}$. 
		For one partite set of $K_{r_{1},r_{2}}$ is in $C_1$, the other partite set contains a vertex in $A_1\setminus \{v\}$ and does not contain the vertex $v$,  the number of copies of $K_{r_{1},r_{2}}$  is $\sum\limits_{j=1}^{2}\begin{pmatrix}t\\r_{j}\end{pmatrix}\left[\begin{pmatrix}n-r_{j}-1\\r-r_{j}\end{pmatrix}-\begin{pmatrix}2s-t-r_{j}-1\\r-r_{j}\end{pmatrix}\right]$. Thus
		\begin{align*}
			N(K_{r_1,r_2},G_{1}(n,s,t))
			&\leq
			\sum\limits_{j=1}^{2}\begin{pmatrix}t\\r_{j}\end{pmatrix}\left[\begin{pmatrix}n-r_{j}-1\\r-r_{j}\end{pmatrix}-\begin{pmatrix}2s-t-r_{j}-1\\r-r_{j}\end{pmatrix}\right]\\
			&\quad +\begin{pmatrix}2s-t-1\\r\end{pmatrix}\begin{pmatrix}r\\r_{1}\end{pmatrix}+\sum\limits_{j=1}^{2}\begin{pmatrix}\delta\\r_{j}\end{pmatrix}\begin{pmatrix}n-r_j-1\\r-r_{j}-1\end{pmatrix}.
		\end{align*}
		Combining with Eq.~\eqref{2}, we have  $$N(K_{r_1,r_2},G(n,s,t))-N(K_{r_1,r_2},G_{1}(n,s,t))
		\geq\begin{pmatrix}2s-t-1\\r-1\end{pmatrix}\begin{pmatrix}r\\r_{1}\end{pmatrix}-\sum\limits_{j=1}^{2}\begin{pmatrix}t\\r_{j}\end{pmatrix}\begin{pmatrix}2s-t-r_{j}-1\\r-r_{j}-1\end{pmatrix}.$$
		Note that
		$$\begin{pmatrix}2s-t-1\\r-1\end{pmatrix}\begin{pmatrix}r\\r_{1}\end{pmatrix}\geq \sum\limits_{j=1}^{2}\begin{pmatrix}t\\r_{j}\end{pmatrix}\begin{pmatrix}2s-t-r_{j}-1\\r-r_{j}-1\end{pmatrix}$$
		$$\Longleftrightarrow \qquad \frac{(2s-t-1)!\cdot r!}{(r-1)!\cdot (2s-t-r)!\cdot(r-r_{1})!\cdot r_{1}!} \geq \sum\limits_{j=1}^{2}\frac{t!\cdot (2s-t-r_{j}-1)!}{r_{j}!\cdot (t-r_{j})!\cdot (r-r_{j}-1)!\cdot (2s-t-r)!}\quad$$
		$$\Longleftrightarrow \qquad \quad\quad(2s-t-1)!\cdot r\geq\frac{t!\cdot(2s-t-r_{1}-1)!\cdot r_{2}}{(t-r_{1})!}+\frac{t!\cdot(2s-t-r_{2}-1)!\cdot r_{1}}{(t-r_{2})!}\quad\quad\quad\qquad$$
		$$\Longleftrightarrow  1\geq\frac{t(t-1)\cdots(t-r_{1}+1)\cdot r_{2}}{(2s-t-1)(2s-t-2)\cdots(2s-t-r_{1})\cdot r}+\frac{t(t-1)\cdots(t-r_{2}+1)\cdot r_{1}}{(2s-t-1)(2s-t-2)\cdots(2s-t-r_{2})\cdot r}.$$
		Since $t\leq s-1$, we have $2s-t-1\ge t$, and hence 
		$$\frac{t(t-1)\cdots(t-r_{j}+1)}{(2s-t-1)(2s-t-2)\cdots(2s-t-r_{j})}\leq 1~\text{with}~j=1,2.$$ Recall that $r_{1}+r_{2}=r$.
		Therefore, $N(K_{r_1,r_2},G(n,s,t))\geq NM(K_{r_1,r_2},\mathcal{F}_1(t))~~\text{for}~~\delta \le t \le s-1$.
		
		Next we prove that $N(K_{r_1,r_2},G(n,s,t))\geq NM(K_{r_1,r_2},\mathcal{F}_2(t))~~\text{for}~~\delta \le t \le s-\frac{1}{2}.
		$ Let $A_2$ be the set of vertices whose degree is at most $t$, $C_2$ be the set of vertices whose degree is at least $n-2$ in $G_{2}(n,s,t)$ and let $B_2=V(G_{2}(n,s,t))\setminus (A_2\cup C_2)$.
		We determine the value of $N(K_{r_1,r_2},G_{2}(n,s,t))$. 
		In $B_2\cup C_2$, the number of copies of $K_{r_{1},r_{2}}$ is $\begin{pmatrix}2s-t-1\\r\end{pmatrix}\begin{pmatrix}r\\r_{1}\end{pmatrix}$. Recall that $v$ is the vertex with minimum degree $\delta$ in $G_2(n,s,t)$. Then the number of copies of $K_{r_{1},r_{2}}$ that contains the vertex $v$ is at most $\sum\limits_{j=1}^{2}\begin{pmatrix}\delta\\r_{j}\end{pmatrix}\begin{pmatrix}n-r_j-1\\r-r_{j}-1\end{pmatrix}$. For one partite set of $K_{r_{1},r_{2}}$ is in $C_2$, the other partite set contains a vertex in $A_2\setminus \{v\}$ and does not contain the vertex $v$, the number of copies of $K_{r_{1},r_{2}}$ is $\sum\limits_{j=1}^{2}\begin{pmatrix}t-1\\r_{j}\end{pmatrix}\left[\begin{pmatrix}n-r_{j}-1\\r-r_{j}\end{pmatrix}-\begin{pmatrix}2s-t-r_{j}-1\\r-r_{j}\end{pmatrix}\right]$.
		Thus,
		\begin{align*}
			N(K_{r_1,r_2},G_{2}(n,s,t))
			&\leq \sum\limits_{j=1}^{2}\begin{pmatrix}t-1\\r_{j}\end{pmatrix}\left[\begin{pmatrix}n-r_{j}-1\\r-r_{j}\end{pmatrix}-\begin{pmatrix}2s-t-r_{j}-1\\r-r_{j}\end{pmatrix}\right]\\
			&\quad+\begin{pmatrix}2s-t-1\\r\end{pmatrix}\begin{pmatrix}r\\r_{1}\end{pmatrix}+\sum\limits_{j=1}^{2}\begin{pmatrix}\delta\\r_{j}\end{pmatrix}\begin{pmatrix}n-r_j-1\\r-r_{j}-1\end{pmatrix}.
		\end{align*}
		Therefore, combining with Eq.~\eqref{2}, we have
		\begin{align*}
			&\quad N(K_{r_1,r_2},G(n,s,t))-N(K_{r_1,r_2},G_2(n,s,t))\\
			&\geq \sum\limits_{j=1}^{2}\begin{pmatrix}t-1\\r_{j}-1\end{pmatrix}\begin{pmatrix}n-r_j-1\\r-r_j\end{pmatrix}-\sum\limits_{j=1}^{2}\begin{pmatrix}t\\r_{j}\end{pmatrix}\begin{pmatrix}2s-t-r_j\\r-r_j\end{pmatrix}	+\sum\limits_{j=1}^{2}\begin{pmatrix}t-1\\r_{j}\end{pmatrix}\begin{pmatrix}2s-t-r_j-1\\r-r_j\end{pmatrix}\\
			&\quad+\begin{pmatrix}2s-t-1\\r-1\end{pmatrix}\begin{pmatrix}r\\r_1\end{pmatrix}\\
			&\geq\sum\limits_{j=1}^{2}\begin{pmatrix}t-1\\r_{j}-1\end{pmatrix}\begin{pmatrix}2s-t-r_j-1\\r-r_j\end{pmatrix}+\sum\limits_{j=1}^{2}\begin{pmatrix}t-1\\r_{j}\end{pmatrix}\begin{pmatrix}2s-t-r_j-1\\r-r_j\end{pmatrix}
			-\sum\limits_{j=1}^{2}\begin{pmatrix}t\\r_{j}\end{pmatrix}\begin{pmatrix}2s-t-r_j\\r-r_j\end{pmatrix}\\
			&\quad+\begin{pmatrix}2s-t-1\\r-1\end{pmatrix}\begin{pmatrix}r\\r_1\end{pmatrix}\\
			&\geq\begin{pmatrix}2s-t-1\\r-1\end{pmatrix}\begin{pmatrix}r\\r_{1}\end{pmatrix}-\sum\limits_{j=1}^{2}\begin{pmatrix}t\\r_{j}\end{pmatrix}\begin{pmatrix}2s-t-r_{j}-1\\r-r_{j}-1\end{pmatrix}\\
			&\geq 0.
		\end{align*}
		The second inequality follows as $n>2s-t$ and the last inequality holds by a similar discussion as above.
		Hence, $N(K_{r_1,r_2},G(n,s,t))\geq N(K_{r_1,r_2},G_2(n,s,t))$ for $\delta\le t\le s-\frac{1}{2}$.
		
		Suppose that $t=s$.  
		In this case, $G_2(n,s,s)-v$ is a subgraph of $G(n,s,s)-u$. Hence, $N(K_{r_1,r_2},G_2(n,s,s)-v)\leq N(K_{r_1,r_2},G(n,s,s)-u)$. Note that the number of copies of $K_{r_1,r_2}$ containing the vertex $v$ in $G_2(n,s,s)$ is at most $\sum\limits_{j=1}^{2}\begin{pmatrix}\delta\\r_{j}\end{pmatrix}\begin{pmatrix}n-r_j-1\\r-r_{j}-1\end{pmatrix}$, which is exactly the number of copies of $K_{r_1,r_2}$ containing the vertex $u$ in $G(n,s,s)$. Thus $N(K_{r_1,r_2},G_2(n,s,s))\leq N(K_{r_1,r_2},G(n,s,s)).$
		
		Therefore, we have $N(K_{r_1,r_2},G(n,s,t))\geq NM(K_{r_1,r_2},\mathcal{F}_2(t))~~\text{for}~~\delta \le t \le s.$
		
		For the case $r_1=r_2$,
		by the same discussion and deleting repeated graphs, it is easy to verify that the number of $K_{r_{1},r_{2}}$ is half of the above. This completes the proof.
	\end{proof}
	
	\begin{lemma}
		Let $n$, $2s$, $r_1$, $r_2$ and $r$ be positive integers. For positive integer $t$ with $t\leq  s  $, $$h(t)=\begin{pmatrix}2s-t\\r\end{pmatrix}\begin{pmatrix}r\\r_{1}\end{pmatrix} +\sum\limits_{j=1}^{2}\begin{pmatrix}t\\r_{j}\end{pmatrix}\left[\begin{pmatrix}n-r_{j}-1\\r-r_{j}\end{pmatrix}-\begin{pmatrix}2s-t-r_{j}\\r-r_{j}\end{pmatrix}\right]$$ is a convex function of $t$.
	\end{lemma}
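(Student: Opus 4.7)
The plan is to imitate the direct calculation used in the proof of Lemma~2.4: compute the discrete second difference $h(t+1)+h(t-1)-2h(t)$ term by term using Pascal's rule, and show that it is nonnegative. Write $h(t)=A(t)+B(t)+C(t)$, where $A(t)=\binom{r}{r_1}\binom{2s-t}{r}$, $B(t)=\sum_{j=1}^{2}\binom{t}{r_j}\binom{n-r_j-1}{r-r_j}$, and $C(t)=-\sum_{j=1}^{2}\binom{t}{r_j}\binom{2s-t-r_j}{r-r_j}$. Two applications of Pascal's rule immediately give $\Delta^{2}A(t)=\binom{r}{r_1}\binom{2s-t-1}{r-2}\ge 0$ and $\Delta^{2}B(t)=\sum_{j}\binom{n-r_j-1}{r-r_j}\binom{t-1}{r_j-2}\ge 0$, since only $\binom{t}{r_j}$ depends on $t$ in each summand of~$B$.

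The substantive computation concerns $\Delta^{2}C(t)$, which involves the product of an increasing and a decreasing binomial in~$t$. Expanding $\binom{t\pm 1}{r_j}$ and $\binom{2s-t\mp 1-r_j}{r-r_j}$ by Pascal's rule yields, after simplification, four explicit binomial-product terms per $j$, two positive and two negative. Combining with $\Delta^{2}A$ and $\Delta^{2}B$ and rearranging, I would split the total $\Delta^{2}h(t)$ into three blocks:
$\mathrm{(i)}$~$\sum_{j}\binom{t-1}{r_j-2}\bigl[\binom{n-r_j-1}{r-r_j}-\binom{2s-t-r_j}{r-r_j}\bigr]\ge 0$, using $n\ge 2s+1$ so that $n-r_j-1\ge 2s-t-r_j$;
$\mathrm{(ii)}$~$\sum_{j}\bigl[\binom{t}{r_j-1}\binom{2s-t-r_j-1}{r-r_j-1}+\binom{t-1}{r_j-1}\binom{2s-t-r_j}{r-r_j-1}\bigr]\ge 0$, trivially; and
$\mathrm{(iii)}$~$\binom{r}{r_1}\binom{2s-t-1}{r-2}-\sum_{j=1}^{2}\binom{t}{r_j}\binom{2s-t-r_j-1}{r-r_j-2}$, the crux of the argument.

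The hard step is block~$\mathrm{(iii)}$, a genuine difference of positive binomial products. My plan is to handle it by a ratio argument. A direct factorial computation gives
\[
\frac{\binom{t}{r_j}\binom{2s-t-r_j-1}{r-r_j-2}}{\binom{r}{r_1}\binom{2s-t-1}{r-2}}=\frac{r_{3-j}(r_{3-j}-1)}{r(r-1)}\prod_{i=0}^{r_j-1}\frac{t-i}{2s-t-1-i}.
\]
Convexity is only needed at interior $t$ (so $t\le s-1$); consequently each factor of the product is at most $\tfrac{s-1-i}{s-i}$, and the product telescopes to at most $\tfrac{s-r_j}{s}$. Summing over $j=1,2$ and using $r_1+r_2=r$, the required bound $\sum_{j}\tfrac{r_{3-j}(r_{3-j}-1)(s-r_j)}{r(r-1)s}\le 1$ reduces, after clearing denominators and using $r_1^{2}+r_2^{2}=r^{2}-2r_1r_2$, to the trivial inequality $2s+r\ge 2$. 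This finishes block~$\mathrm{(iii)}$ and hence the convexity of~$h$. The Pascal-rule bookkeeping in $\Delta^{2}C(t)$ and the telescoping estimate are the only parts needing care; everything else is routine.
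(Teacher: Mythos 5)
Your proposal is correct and follows essentially the same route as the paper's proof: compute the discrete second difference of $h$ via Pascal's rule, discard the terms that are nonnegative because $n\ge 2s+1$, and reduce to a binomial inequality of the form $\binom{2s-t-1}{r-2}\binom{r}{r_1}\ge \sum_{j}\binom{t}{r_j}\binom{2s-t-r_j-1}{r-r_j-2}$, proved by a factorial ratio bound using $r_1+r_2=r$ (your regrouping is a valid identity; it matches the paper's expansion after one more application of Pascal's rule). The only difference is bookkeeping: your block (iii) keeps $\binom{t}{r_j}$ where the paper's grouping leaves $\binom{t-1}{r_j}$, which is why you need $t\le s-1$ and the telescoping factor $(s-r_j)/s$, whereas the paper's slightly weaker inequality holds for all $t\le s$ directly from $2s-t-1\ge t-1$; both suffice for the convexity needed in the application.
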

	
	\begin{proof}[\rm{\textbf{Proof.}}]
		By direct calculation, we have
		\begin{align*}
			h(t+1)-h(t)&=-\begin{pmatrix}2s-t-1\\r-1\end{pmatrix}\begin{pmatrix}r\\r_{1}\end{pmatrix}+\sum\limits_{j=1}^{2}\begin{pmatrix}t\\r_{j}-1\end{pmatrix}\begin{pmatrix}n-r_{j}-1\\r-r_{j}\end{pmatrix}\\
			&\quad-\sum\limits_{j=1}^{2}\left[\begin{pmatrix}t+1\\r_{j}\end{pmatrix}\begin{pmatrix}2s-t-r_{j}-1\\r-r_{j}\end{pmatrix}-\begin{pmatrix}t\\r_{j}\end{pmatrix}\begin{pmatrix}2s-t-r_{j}\\r-r_{j}\end{pmatrix}\right].
		\end{align*}
		Since $\begin{pmatrix}2s-t-r_{j}\\r-r_{j}\end{pmatrix}=\begin{pmatrix}2s-t-r_{j}-1\\r-r_{j}\end{pmatrix}+\begin{pmatrix}2s-t-r_{j}-1\\r-r_{j}-1\end{pmatrix}$,  we have
		\begin{align*}
			h(t+1)-h(t)&=-\begin{pmatrix}2s-t-1\\r-1\end{pmatrix}\begin{pmatrix}r\\r_{1}\end{pmatrix}+\sum\limits_{j=1}^{2}\begin{pmatrix}t\\r_{j}-1\end{pmatrix}\begin{pmatrix}n-r_{j}-1\\r-r_{j}\end{pmatrix}\\
			&\quad+\sum\limits_{j=1}^{2}\left[\begin{pmatrix}t\\r_{j}\end{pmatrix}\begin{pmatrix}2s-t-r_{j}-1\\r-r_{j}-1\end{pmatrix}-\begin{pmatrix}t\\r_{j}-1\end{pmatrix}\begin{pmatrix}2s-t-r_{j}-1\\r-r_{j}\end{pmatrix}\right].
		\end{align*}
		Similarly, we have
		\begin{align*}
			h(t-1)-h(t)&=\begin{pmatrix}2s-t\\r-1\end{pmatrix}\begin{pmatrix}r\\r_{1}\end{pmatrix}-\sum\limits_{j=1}^{2}\begin{pmatrix}t-1\\r_{j}-1\end{pmatrix}\begin{pmatrix}n-r_{j}-1\\r-r_{j}\end{pmatrix}\\
			&\quad-\sum\limits_{j=1}^{2}\left[
			\begin{pmatrix}t-1\\r_{j}\end{pmatrix}\begin{pmatrix}2s-t-r_{j}\\r-r_{j}-1\end{pmatrix}-
			\begin{pmatrix}t-1\\r_{j}-1\end{pmatrix}\begin{pmatrix}2s-t-r_{j}\\r-r_{j}\end{pmatrix}\right].
		\end{align*}
		Therefore,
		\begin{align*}
			&\quad h(t+1)+h(t-1)-2h(t)\\
			&=\sum\limits_{j=1}^{2}\begin{pmatrix}t-1\\r_{j}-2\end{pmatrix}\begin{pmatrix}n-r_{j}-1\\r-r_{j}\end{pmatrix}+\sum\limits_{j=1}^{2}\left[\begin{pmatrix}t\\r_{j}\end{pmatrix}\begin{pmatrix}2s-t-r_{j}-1\\r-r_{j}-1\end{pmatrix}-\begin{pmatrix}t-1\\r_{j}\end{pmatrix}\begin{pmatrix}2s-t-r_{j}\\r-r_{j}-1\end{pmatrix}\right]\\
			&\quad+\sum\limits_{j=1}^{2}\left[\begin{pmatrix}t-1\\r_{j}-1\end{pmatrix}\begin{pmatrix}2s-t-r_{j}\\r-r_{j}\end{pmatrix}-\begin{pmatrix}t\\r_{j}-1\end{pmatrix}\begin{pmatrix}2s-t-r_{j}-1\\r-r_{j}\end{pmatrix}\right]+\begin{pmatrix}2s-t-1\\r-2\end{pmatrix}\begin{pmatrix}r\\r_{1}\end{pmatrix}.
		\end{align*}
		Using the Pascal's Rule repeatedly, we have
		\begin{align*}
			&~~~~~h(t+1)+h(t-1)-2h(t)\\
			&=\sum\limits_{j=1}^{2}\begin{pmatrix}t-1\\r_{j}-2\end{pmatrix}\begin{pmatrix}n-r_{j}-1\\r-r_{j}\end{pmatrix}+\sum\limits_{j=1}^{2}\left[\begin{pmatrix}t-1\\r_{j}-1\end{pmatrix}\begin{pmatrix}2s-t-r_{j}-1\\r-r_{j}-1\end{pmatrix}-\begin{pmatrix}t-1\\r_{j}\end{pmatrix}\begin{pmatrix}2s-t-r_{j}-1\\r-r_{j}-2\end{pmatrix}\right]\\
			&~~+\sum\limits_{j=1}^{2}\left[\begin{pmatrix}t-1\\r_{j}-1\end{pmatrix}\begin{pmatrix}2s-t-r_{j}-1\\r-r_{j}-1\end{pmatrix}-\begin{pmatrix}t-1\\r_{j}-2\end{pmatrix}\begin{pmatrix}2s-t-r_{j}-1\\r-r_{j}\end{pmatrix}\right]+\begin{pmatrix}2s-t-1\\r-2\end{pmatrix}\begin{pmatrix}r\\r_{1}\end{pmatrix}\\
			&\geq \sum\limits_{j=1}^{2}\begin{pmatrix}t-1\\r_{j}-2\end{pmatrix}\begin{pmatrix}n-r_{j}-1\\r-r_{j}\end{pmatrix}-\sum\limits_{j=1}^{2}\left[\begin{pmatrix}t-1\\r_{j}-2\end{pmatrix}\begin{pmatrix}2s-t-r_{j}-1\\r-r_{j}\end{pmatrix}+\begin{pmatrix}t-1\\r_{j}\end{pmatrix}\begin{pmatrix}2s-t-r_{j}-1\\r-r_{j}-2\end{pmatrix}\right]\\
			&~~+\begin{pmatrix}2s-t-1\\r-2\end{pmatrix}\begin{pmatrix}r\\r_{1}\end{pmatrix}.
		\end{align*}
		Since $n-r_{j}-1\geq 2s-t-r_{j}-1$, we have
		\[
		h(t+1)+h(t-1)-2h(t)\geq \begin{pmatrix}2s-t-1\\r-2\end{pmatrix}\begin{pmatrix}r\\r_{1}\end{pmatrix}-\sum\limits_{j=1}^{2}\begin{pmatrix}t-1\\r_{j}\end{pmatrix}\begin{pmatrix}2s-t-r_{j}-1\\r-r_{j}-2\end{pmatrix}.
		\]
		To prove $h(t+1)+h(t-1)\geq 2h(t)$, we only need to prove
		\begin{align*}	&\quad\quad\quad\qquad\qquad\qquad\qquad\begin{pmatrix}2s-t-1\\r-2\end{pmatrix}\begin{pmatrix}r\\r_{1}\end{pmatrix} \geq \sum\limits_{j=1}^{2}\begin{pmatrix}t-1\\r_{j}\end{pmatrix}\begin{pmatrix}2s-t-r_{j}-1\\r-r_{j}-2\end{pmatrix}\\
			&\Longleftrightarrow\frac{(2s-t-1)!\cdot r!}{(r-2)!\cdot (2s-t-r+1)!\cdot(r-r_1)!\cdot r_{1}!} \geq \sum\limits_{j=1}^{2}\frac{(t-1)!\cdot (2s-t-r_{j}-1)!}{r_{j}!\cdot (t-1-r_{j})!\cdot (r-r_{j}-2)!\cdot (2s-t-r+1)!}\\
			&\Longleftrightarrow \qquad\qquad\qquad\qquad 1\geq \sum\limits_{j=1}^{2}\frac{(t-1)(t-2)\cdots (t-r+r_{j})\cdot r_{j}\cdot(r_{j}-1)}{(2s-t-1)\cdots(2s-t-r+r_{j})\cdot r\cdot (r-1)}.
		\end{align*}
		Since $2s-t\geq t$ and $r=r_{1}+r_{2}$, the last inequality holds. 
		Therefore, $h(t+1)+h(t-1)\geq 2h(t)$, as desired.
	\end{proof}
	By Lemma~2.7, we have the following proposition.
	\begin{proposition}\label{convex2}
		Let $n$, $2s$, $r$, $r_1$ and $r_2$ be positive integers. For positive integer $t$ with $t\leq  s  $, let $g(t)=c\cdot g_{r_1,r_2}(n,s,t)$, where $c=1$ if $r_{1}\neq r_{2}$, and $c=2$ if $r_1=r_2$. Then
		$$g(t)= \begin{pmatrix}2s-t\\r\end{pmatrix}\begin{pmatrix}r\\r_{1}\end{pmatrix} +\sum\limits_{j=1}^{2}\begin{pmatrix}t\\r_{j}\end{pmatrix}\left[\begin{pmatrix}n-r_{j}-1\\r-r_{j}\end{pmatrix}-\begin{pmatrix}2s-t-r_{j}\\r-r_{j}\end{pmatrix}\right]
		+\sum\limits_{j=1}^{2}\begin{pmatrix}\delta\\r_{j}\end{pmatrix}\begin{pmatrix}n-r_j-1\\r-r_{j}-1\end{pmatrix} $$ is a convex function of $t$.
	\end{proposition}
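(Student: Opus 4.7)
The plan is to reduce Proposition~2.8 directly to Lemma~2.7. Comparing the two formulas, I observe that
\[
g(t) \;=\; h(t) \;+\; \sum_{j=1}^{2}\binom{\delta}{r_{j}}\binom{n-r_j-1}{r-r_{j}-1},
\]
where $h(t)$ is exactly the function shown to be convex in Lemma~2.7. The second summand involves only $\delta$, $n$, $r$, $r_1$, $r_2$ and is therefore independent of the variable $t$.

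First I would state this decomposition explicitly and note that a function that does not depend on $t$ is, as a function of $t$, a constant, hence trivially convex (the second-difference $\Delta^2$ evaluates to $0$). Next, I would invoke Lemma~2.7 to conclude that $h(t+1)+h(t-1)-2h(t)\ge 0$ for every integer $t$ with $1\le t\le s-1$. Taking the sum of a convex function and a constant preserves convexity, so
\[
g(t+1)+g(t-1)-2g(t) \;=\; h(t+1)+h(t-1)-2h(t) \;\ge\; 0,
\]
which yields the claim.

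Since both the decomposition and the additivity of second differences are immediate, there is essentially no hard step here; Proposition~2.8 is a direct corollary of Lemma~2.7, and the only thing to verify carefully is that the $t$-independent term $\sum_{j=1}^{2}\binom{\delta}{r_{j}}\binom{n-r_j-1}{r-r_{j}-1}$ really does coincide with the part of $g(t)$ that is absent from $h(t)$, which one reads off by comparing the two displayed formulas.
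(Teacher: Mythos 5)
Your proposal is correct and matches the paper's own reasoning: the paper derives Proposition~\ref{convex2} directly from Lemma~2.7, implicitly using exactly your observation that $g(t)=h(t)+\sum_{j=1}^{2}\binom{\delta}{r_{j}}\binom{n-r_j-1}{r-r_{j}-1}$ and that the added term is independent of $t$, so the second differences of $g$ and $h$ coincide. Nothing further is needed.
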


	\begin{proof}[\rm{\textbf{Proof of Theorem~\ref{kr}}}]
		Let $G$ be a graph attaining the maximum number of copies of $K_{r_1,r_2}$ with fractional matching number $s$ and minimum degree $\delta$.
		We first consider the case $r_{1}\neq r_{2}$.
		
		By fractional Tutte-Berge formula, it is not hard to see that $G$ is a subgraph of $K_t\vee (K_{2s-2t}+ \overline{K_{n+t-2s}})$ with $t\le s$ and $2s-2t\neq 1$. Since $\delta(G)=\delta$, it is clear that $\delta\le t$ and hence $G$ is a subgraph of $G(n,s,t)$, $G_{1}$ or $G_{2}$, where $G_{1}\in \mathcal{F}_1(t)$, $G_{2}\in \mathcal{F}_2(t)$. 
		Note that deleting any edge of a graph does not increase the number of copies of $K_{r_1,r_2}$. So we may assume that $G=G(n,s,t)$, $G\in \mathcal{F}_1(t)$ or $G\in \mathcal{F}_2(t)$.
		
		In particular, if $G\in \mathcal{F}_1(s)$, then $\delta=s$, and hence $G=G(n,s,s)$.
		By the maximality of $G$ and Lemma 2.6, we have $G=G(n,s,t)$ for some positive integer $t$ with $\delta\le t\le s$.
		
		{\bf  Case 1. $2s$ is even.}  In this case, $s$ is an integer. By Proposition~\ref{convex2}, we have $t=\delta$ or $t=s$.
		If $t=\delta$, then $G=G(n,s,\delta)$ and hence $N(K_{r_1,r_2},G)=g_{r_1,r_2}(n,s,\delta).$
		If $t=s$, then $G=G(n,s,s)$ and hence $N(K_{r_1,r_2},G)=g_{r_1,r_2}(n,s,s).$
		
		{\bf Case 2. $2s$ is odd.} Since $t\neq s-\frac{1}{2}$, we have $\delta \leq t \leq s-\frac{3}{2}$. By Proposition \ref{convex2}, we have $t=\delta$ or $t=s-\frac{3}{2}$. If $t=\delta$, then $G=G(n,s,\delta)$, and hence $N(K_{r_1,r_2},G)=g_{r_1,r_2}(n,s,\delta)$. If $t=s-\frac{3}{2}$, then $G=G(n,s,s-\frac{3}{2})$ and hence $N(K_{r_1,r_2},G)=g_{r_1,r_2}(n,s,s-\frac{3}{2}).$
		
		For the case $r_{1}= r_{2}$, we have $c=2$ in Eq.~\eqref{2}, by the same discussion, it is easy to verify that the number of copies of $K_{r_{1},r_{2}}$ is as desired. This completes the proof.
	\end{proof}
	
	\section{Conclusion}
	
	In this paper, we have determined the maximum number of copies of $K_{\ell}$ in an $n$-vertex graph with prescribed fractional matching number and minimum degree. 
	Our result yields Shi and Ma's work in \cite{Shi} about the maximum size of graphs with given fractional matching number and minimum degree at least one.
	Moreover, we have used a similar method to determine the maximum number of copies of $K_{r_{1},r_{2}}$ with prescribed fractional matching number and minimum degree.
	
	{\bf Acknowledgement.} The authors are grateful to Professor Xingzhi Zhan for his constant support and guidance. The authors also thank Dr. Leilei Zhang for suggesting the problems investigated in this paper and for helpful comments. This research  was supported by the NSFC grant 12271170 and Science and Technology Commission of Shanghai Municipality (STCSM) grant 22DZ2229014.


\begin{thebibliography}{99}
		\bibitem{alon}N. Alon, P. Frankl, Tur$\acute{\mathrm {a} }$n graphs with bounded matching number, J. Comb. Theory, Ser. B 165 (2024) 223--229.
		
		\bibitem{bondy}
		J.A. Bondy, U.S.R. Murty, Graph Theory, GTM, vol. 244, Springer, Berlin, 2008.
		
		\bibitem{chvatal}V. Chv$\acute{\mathrm{a}}$tal, D. Hanson, Degrees and matchings, J. Comb. Theory, Ser. B 20 (1976) 128--138.
		
		\bibitem{duan}X. Duan, B. Ning, X. Peng, J. Wang, W. Yang, Maximizing the number of cliques in graphs with given matching number, Discrete Appl. Math. 287 (2020) 110--117.
		
		\bibitem{Erdos}P. Erd\H{o}s, T. Gallai., On maximal paths and circuits of graphs, Acta Math. Acad. Sci. Hung. 10 (1959) 337--356.
		
		\bibitem{Gerbner}D. Gerbner, A. Methuku, M. Vizer, Generalized Tur$\acute{\mathrm {a} }$n problems for disjoint copies of graphs, Discrete Math. 342 (2019) 3130--3141.
		
		\bibitem{zll}Y. Liu, L. Zhang, The maximum number of complete multipartite subgraphs in graphs with given circumference or matching number, Discrete Math. 347(1) (2024) 113734.
		
		\bibitem{12}C. Lu, L. Yuan, P. Zhang. The maximum number of copies of $K_{r,s}$ in graphs without long cycles or paths, Electron. J. Combin. 28(4) (2021) 4.4, 15 pp.
		
		\bibitem{Luo}R. Luo, The maximum number of cliques in graphs without long cycles, J. Comb. Theory, Ser. B 128 (2017) 219--226.
		
		\bibitem{Ma}T. Ma, J. Qian, C. Shi, Maximum Size of a graph with given fractional matching number, Electron. J. Combin. 29(3) (2022) 3.55, 13 pp.
		
		\bibitem{Ning}B. Ning, J. Wang, The formula for Tur$\acute{\mathrm {a} }$n number of spanning linear forests, Discrete Math. 343(8) (2020) 111924.
		
		
		\bibitem{Esch}E. Scheinerman, D. Ullman, Fractional Graph Theory: A Rational Approach to the Theory of Graphs, John Wiley, New York, 1997.
		
		\bibitem{Shi}C. Shi, T. Ma, A note on maximum size of a graph without isolated vertices under the given matching number, Appl. Math. Comput. 460 (2024) 128295.
		
		\bibitem{Turan}P. Tur$\acute{\mathrm {a} }$n, On an external problem in graph theory, Mat. Fiz. Lapok, 48 (1941) 436--452.
		
		\bibitem{wang}J. Wang, The shifting method and generalized Tur$\acute{\mathrm {a} }$n number of matchings, Eur. J. Comb. 85 (2020) 7.
		
		\bibitem{zll2}L. Zhang, Further Results on the Generalized Tur$\acute{\mathrm {a} }$n Number of Spanning Linear Forests, Bull. Malays. Math. Sci. Soc. 46(1) (2023) 22.
		
		\bibitem{zlw}L. Zhang, L. Wang, J. Zhou, The generalized Tur$\acute{\mathrm {a} }$n number of spanning linear forests, Graphs Comb. 38 (2022) 40.
	\end{thebibliography}
\end{document}